\documentclass[preprint,11pt,2015]{elsarticle}
\journal{Discrete Mathematics}
\usepackage{amsmath,amssymb,latexsym,color,epsfig,a4, enumerate}
\usepackage{a4wide}

\parindent 0in
\parskip 2mm

\newtheorem{theorem}{Theorem}[section]
\newtheorem{lemma}[theorem]{Lemma}
\newtheorem{claim}[theorem]{Claim}
\newtheorem{proposition}[theorem]{Proposition}
\newtheorem{observation}[theorem]{Observation}

\newtheorem{problem}[theorem]{Problem}

\newtheorem{definition}[theorem]{Definition}

\def\cf{{\cal F}}
\def\cP{{\cal P}}

\def\dang{{\rm dang}}
\def\depth{{\rm depth}}

\newcommand{\gnp}{\ensuremath{\mathcal{G}_{n,p}}}

\newenvironment{proof}{\noindent{\bf Proof\,}}{\hfill$\Box$}


\begin{document}

\begin{frontmatter}

\title{Creating cycles in Walker-Breaker games}

\author[add1]{Dennis Clemens}
\address[add1]{Technische Universit\"at Hamburg-Harburg, Institut f\"ur Mathematik, Am Schwarzenberg-Campus 3, 21073 Hamburg, Germany}
\ead{dennis.clemens@tuhh.de}
\author[add2]{Tuan Tran}
\address[add2]{Freie Universit\"at Berlin, Institut f\"ur Mathematik, Arnimallee 3, 14195 Berlin, Germany}
\ead{tuan@math.fu-berlin.de}

\begin{abstract}
We consider biased $(1:b)$ Walker-Breaker games: Walker and Breaker alternately claim edges
of the complete graph $K_n$, Walker taking one edge and Breaker claiming $b$ edges
in each round, with the constraint that Walker needs to choose
her edges according to a walk. As questioned in a paper by
Espig, Frieze, Krivelevich and Pegden, we study how long a cycle Walker is able to create
and for which biases $b$ Walker has a chance to create a cycle of given constant length.
\end{abstract}

\begin{keyword}
positional games \sep Walker-Breaker \sep cycle game \sep threshold bias

\end{keyword}	
	
\end{frontmatter}

\section{Introduction}

In this paper we consider a variant of the well-known Maker-Breaker games, 
which have  been studied by various researchers (see e.g.~\cite{BeckBook,HKSSBook}). Biased $(a:b)$
Maker-Breaker games are played as follows: Given a hypergraph $(X,\cf)$,
the players alternately take turns in claiming elements from
the {\em board} $X$, where Maker claims $a$
elements 
in each round followed by Breaker claiming $b$ elements. 
If Maker
manages to occupy all the elements of one of the {\em winning sets}
$F\in \cf$, she is declared to be the winner of the game.
Otherwise, if Breaker claims at least one element in each of the winning sets,
Breaker wins the game. The values $a$ and $b$ are called
the {\em biases} of Maker and Breaker, respectively. If $a=b=1$ holds, the game
is referred to as an {\em unbiased} game.

Throughout the paper, we will concentrate on the case when $X=E(K_n)$, i.e., the two players
claim edges of the complete graph on $n$ vertices. 
Many natural unbiased Maker-Breaker games on $K_n$
significantly favor Maker.
Indeed, Maker can create a connected spanning subgraph of $K_n$
within $n-1$ rounds. 
For even $n$, she is able to create a perfect matching within $\frac{n}{2}+1$
rounds, as shown by Hefetz, Krivelevich, Stojakovi\'c and Szab\'o~\cite{HKSS2009b}.
She can create a Hamilton cycle within $n+1$ rounds~\cite{HS2009},
and for $k\geq 2$ she can create a $k$-connected spanning subgraph
within $\lfloor \frac{kn}{2} \rfloor +1$ rounds~\cite{FH2014}. So, for any of these structures
the number of rounds she needs to play is at most one larger than the minimal size of a winning set.

Due to this overwhelming power of Maker in these kinds of games,
it is natural to consider variations that help to increase Breaker's power. One natural way is to increase Breaker's bias
as initiated by Chv\'atal and Erd\H{o}s~\cite{CE1978}, and continued further in e.g.~\cite{BL2000, GS2009, K2011, NSS2014}.
A second possibility for increasing Breaker's power is to decrease the number of winning sets,
by making the board sparser. Games on sparser graphs are discussed in e.g.~\cite{BFHK2012, CFKL2012, FGKN2012, HKSS2009a, SS2005}.

Finally, a third option is to restrict Maker's choices according to some pre-defined rule.
In this regard, we consider $(1:b)$ Walker-Breaker games on $K_n$, which are played like
Maker-Breaker games, just with one additional constraint: Walker (playing the role of Maker)
has to choose her edges according to a walk. 
These games have been introduced recently by Espig, Frieze, Krivelevich and Pegden~\cite{EFKP2014},
and their precise rules are as follows. Walker and Breaker alternately occupy edges of $K_n$, Walker choosing one edge and
Breaker choosing $b$ edges in each round. At any moment,
we identify some vertex $v$ as the {\em position} of Walker in the game.
For her next move, Walker then needs to choose an edge $vw$ incident with $v$
which has not been chosen by Breaker so far (but could have been chosen by Walker earlier).
Walker claims this particular edge (in case she did not choose and claim it before),
and makes $w$ her new position. In contrast, there are no restrictions for Breaker.
By the end of the game, Walker wins if she occupies all the elements of one
winning set, while Breaker wins otherwise.

It is easy to see that in unbiased Walker-Breaker games, Walker cannot hope to create
a spanning structure, contrary to Maker-Breaker games where Maker can easily do so even 
when Breaker's bias is of size $(1-o(1))\frac{n}{\ln n}$, see e.g.~\cite{GS2009,K2011}.
Indeed, Breaker can easily isolate a vertex from Walker's graph
by fixing a vertex right after Walker's first move and then
always claiming the edge between this particular vertex and
Walker's current position.
So, Walker will not be able to create a spanning tree or a Hamilton cycle. 
It thus becomes natural to ask how large a structure 
Walker is able to create.

Espig, Frieze, Krivelevich and Pegden~\cite{EFKP2014} studied how many vertices
Walker can visit for various variants of the game, i.e.~they studied how large a tree 
Walker can obtain. For instance, when $1\leq b= O(1)$, they showed that $n-2b+1$ is the 
largest number of vertices that Walker can visit.  Moreover, in the case when Walker is not allowed to return to vertices, 
they proved this number to be $n-\Theta(\ln n)$ for $b>1$, with the value being precisely $n-2$ for $b=1$ and $n\geq 6$.
Adressing a question of Espig, Frieze, Krivelevich and Pegden~\cite{EFKP2014}, we now discuss games in which Walker aims to
occupy large cycles. In particular, we prove the following theorems.

\begin{theorem}\label{Walker_cycle1}
	For large enough $n$, Walker can create a cycle of length $n-2$
	in the unbiased Walker-Breaker game on $K_n$, while Breaker (as first player)
	can prevent any longer cycle.
\end{theorem}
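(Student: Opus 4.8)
The plan is to have Walker build her cycle incrementally. First she traces a short simple path $u_0u_1u_2u_3$, choosing each new vertex so that the edges she needs are still unclaimed — possible since Breaker has made only a bounded number of moves — and then closes it into a cycle via an edge of the form $u_3u_1$ or $u_3u_0$. Breaker can forbid at most one of these two edges in the single move preceding Walker's closing move, so Walker ends up standing on a cycle of length $3$ or $4$ that omits at most two vertices. From then on she repeatedly enlarges the cycle by one vertex while remaining positioned on it, until it has length $n-2$.

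The enlargement step is the core. Walker stands at a vertex $v$ of her current cycle $C$ with cycle-neighbours $v^-,v^+$; let $U$ be the set of not-yet-visited vertices. She picks $w\in U$ with $vw$ unclaimed, moves to $w$, and — after Breaker's reply — walks from $w$ to whichever of $v^+,v^-$ is still available; this reroutes the corresponding cycle-edge through $w$ and leaves Walker on a cycle one longer. Since Breaker gets exactly one move between Walker's two steps, he can kill at most one of $wv^+,wv^-$, so the step succeeds provided Walker can make her initial choice of $w$ (and a compatible direction) at all. Showing that she always can is the technical heart: in the early and middle phases $|U|$ is large while the total number of Breaker's edges is at most the current round number, so $v$ cannot have all of its edges into $U$ claimed; in the last couple of phases Walker additionally exploits the freedom to first walk to a more convenient vertex of $C$ before starting a detour. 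I expect the real obstacle to be exactly this endgame, where $|U|$ has shrunk to $3$ and Walker must still force an enlargement to length $n-2$ against Breaker's concentrated defence — delicate precisely because the argument has to work for target length $n-2$ and has to fail for $n-1$, and landing on $n-2$ is where the bias being exactly $1$ enters.

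\textbf{Upper bound (Breaker prevents any longer cycle).} Breaker moves first, so after Walker's first move her starting vertex $u_0$ is fixed. Breaker plays a single strategy aimed at two properties of Walker's final graph $W$: (i) some vertex other than $u_0$ is never visited; and (ii) Walker never returns to $u_0$, so that $\deg_W(u_0)=1$ and $u_0$ lies on no cycle of $W$. Given (i) and (ii), every cycle of $W$ avoids $u_0$ and hence uses only visited vertices other than $u_0$, of which there are at most $n-2$ by (i); so $W$ has no cycle of length $n-1$, and none of length $n$ (also excluded by (i)). Property (i) Breaker secures by a reservoir defence in the spirit of the vertex-visiting strategy of Espig, Frieze, Krivelevich and Pegden, keeping Walker out of a shrinking set of untouched vertices; property (ii) he maintains by deviating — whenever Walker's current position $p$ still has $pu_0$ unclaimed, so a return to $u_0$ is an immediate threat — to claim $pu_0$ instead of his reservoir move.

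The crux, and the step I expect to be hardest, is to show that (i) and (ii) do not conflict under bias $1$: the moves Breaker spends blocking returns to $u_0$ have to be paid for, and they can be, because each time Walker revisits a vertex Breaker gains a free move (his reservoir move there having already been played), together with his spare opening move; more fundamentally, a Walker who keeps manufacturing threats to return to $u_0$ is spending moves that are then not used to reach new vertices, so she cannot also drain the reservoir. Converting this trade-off into an invariant Breaker can maintain round by round — so that he is never caught having to defend the reservoir and block a return in the same move — is the real work, and it is exactly where $b=1$, rather than $b\ge2$, is essential.
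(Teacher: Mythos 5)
Your Breaker half rests on maintaining simultaneously (i) a never-visited vertex $z$ and (ii) $d_W(u_0)=1$, and you correctly flag this combination as the crux --- but the trade-off you sketch does not close it, and in fact a bias-$1$ Breaker playing as you describe cannot keep both invariants. Whenever Walker steps onto a fresh vertex $p$ with both $pu_0$ and $pz$ still unclaimed (the generic situation, since a fresh vertex starts with all its edges free), a \emph{single} Walker move has created two simultaneous threats, and Breaker's one reply kills at most one of $pu_0$, $pz$: if he guards the reservoir, Walker claims $pu_0$ and (ii) is irretrievably lost; if he blocks $pu_0$, she walks to $z$ and (i) is lost. Your accounting (``revisits give Breaker a free move'', ``manufacturing return threats costs Walker progress'') does not apply, because the double threat is produced by the very move that makes progress towards the reservoir; Walker never spends a separate move on it. (Also, ``never returns to $u_0$'' is not the right invariant: she may return by reusing her first edge; what you need is that she never claims a \emph{second} edge at $u_0$, and that is threatened at essentially every fresh position, i.e.\ every round.) The paper's Breaker avoids this dilemma by actively defending only one vertex at a time: he isolates a vertex $w_1$ (always claiming the edge from Walker's current position to $w_1$) until Walker's component has $n-2$ vertices; at that moment the only free edges at $w_1$ are $w_1w_2$ and $w_1v'$, so after he switches to guarding the other unvisited vertex $w_2$, the vertex $w_2$ is never visited and $w_1$ can never reach Walker-degree $2$ --- two vertices excluded from every cycle, with only one threat to answer per round.

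For the lower bound your route also differs from the paper's, and its key step is exactly the unproved one. Maintaining a cycle and inserting one fresh vertex at a time hands Breaker a free move essentially every round: once you step from $v$ to a fresh $w$ with $wv^-$ and $wv^+$ both free, Breaker's intervening reply is irrelevant to that insertion, so over the roughly $2n$ rounds he accumulates roughly $2n$ edges to place wherever he pleases. Consequently the justification ``Breaker's edges are at most the current round number, so $v$ has a free edge into $U$'' fails throughout the second half of the game, when $|U|$ is far smaller than the number of rounds played; Breaker can arrange that no fresh $w$ has all of $vw$, $wv^-$, $wv^+$ free for the positions Walker can cheaply reach, and nothing in the sketch prevents this. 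The paper's Walker denies Breaker this tempo by first growing a \emph{path} through fresh vertices: every round the cycle-closing edge $v_tv_0$ is a threat Breaker must answer, which pins almost all of his edges at $v_0$ (Observation~\ref{obs:StageI}); only then does she close a cycle missing $O(1)$ vertices and finish via Lemma~\ref{extend}, whose insertion step needs the stronger supply of two candidate outside vertices and three consecutive cycle vertices with low Breaker-degree, together with the bookkeeping of Property $P[t,x,i]$ and the dangerous set $V_t$ to survive the endgame. As written, your proposal is missing precisely this mechanism for denying Breaker free moves, so neither half of the theorem is established.
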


\begin{theorem} \label{Walker_cycle2}
Let $n$ and $b=b(n)\leq \frac{n}{\ln^2 n}$ be integers. Then, in the $(1:b)$ Walker-Breaker game on $K_n$, Walker can create a cycle of length $n-O(b)$.
\end{theorem}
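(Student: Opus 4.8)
The plan is to have Walker first build a long path and then close it into a long cycle, following the rotation--extension paradigm. I would split Walker's play into three phases. In \textbf{Phase~I} Walker greedily builds a path $P=v_0v_1\cdots v_m$ on $(1-o(1))n$ vertices; in \textbf{Phase~II} she uses P\'osa-type rotations to enlarge the vertex set of her path until all but $O(b)$ vertices have been visited; and in \textbf{Phase~III} she uses a few more rotations to create a free edge between the two (rotated) endpoints of her path and claims it, thereby closing a cycle on all $n-O(b)$ vertices of the path. Throughout, the role of the hypothesis $b\le n/\ln^2 n$ is that after $O(n)$ rounds Breaker owns only $O(bn)=o(n^2)$ edges, so average Breaker-degrees stay of order $b$ and $\sqrt{bn}=o(n)$.

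For \textbf{Phase~I}, I would let Walker move, from her current position $v_k$, to an unvisited vertex $w$ joined to $v_k$ by a free edge and having the smallest current Breaker-degree among all such $w$. Since the sum of all Breaker-degrees is $O(bk)$ after $k$ rounds and about $n-k$ vertices are still unvisited, a short averaging argument shows Walker can always pick $w$ with Breaker-degree $O\!\left(\tfrac{bk}{n-k}\right)$ at the moment she arrives, hence with Breaker-degree $O\!\left(\tfrac{bk}{n-k}\right)+b$ after Breaker responds; consequently a free edge from $w$ to a still-unvisited vertex exists as long as the number of unvisited vertices is at least $C\sqrt{bn}$. So Phase~I ends with a path $P$ on $n-O(\sqrt{bn})=(1-o(1))n$ vertices whose current endpoint has Breaker-degree only $O(\sqrt{bn})+b$.

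In \textbf{Phase~II}, whenever Walker cannot directly extend $P$ into the set $U$ of unvisited vertices (meaning Breaker owns every edge from the current endpoint into $U$), Walker performs a rotation: she claims a free chord $v_mv_i$, walks to $v_i$, then walks along the already-claimed edge $v_iv_{i+1}$, so that $v_{i+1}$ becomes the new endpoint of a path on the same vertex set. The key point is a P\'osa-type lemma: the set of endpoints reachable by sequences of rotations has size $\Omega(n)$, because Breaker's $o(n^2)$ edges cannot kill the rotation expansion; and as long as $|U|\gg b$, Breaker cannot own all edges incident to $U$ (that would cost about $|U|\cdot n\gg bn$ edges), so some reachable endpoint has a free edge into $U$ and Walker extends her path by one vertex. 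Iterating drives $|U|$ down to $O(b)$. One has to verify that the total number of rotations, hence of rounds, stays $O(n)$ so that Breaker's board stays $o(n^2)$, and that at each individual rotation enough free chords are available for Walker to realise it against Breaker's responses.

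Finally, in \textbf{Phase~III} Walker keeps the vertex set of her path (of size $n-O(b)$) fixed and only rotates to adjust the two endpoints: rotating one end yields $\Omega(n)$ possible endpoints $y$, and for each such $y$ rotating the other end yields $\Omega(n)$ possible endpoints $x$, hence $\Omega(n^2)$ candidate pairs $(x,y)$; since Breaker owns only $o(n^2)$ edges, some pair is joined by a free edge. Walker realises the corresponding rotations, walks to $x$ along claimed edges, and claims $xy$, obtaining a cycle through every vertex of the path, i.e.\ a cycle of length $n-O(b)$. I expect the main obstacle to be making the rotation machinery of Phases~II and~III robust against an adaptive Breaker of large bias: one must keep the Breaker-degrees of the path vertices (especially of vertices that get re-used during rotations) under control so that enough free chords remain at every single rotation, and bound the number of rotations so that the accumulated Breaker board stays $o(n^2)$ --- and it is precisely here that the constraint $b\le n/\ln^2 n$ is needed.
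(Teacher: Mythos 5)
Your mechanics for respecting the walk constraint are fine (Walker can claim a chord $v_mv_i$ from her current endpoint, step back along her own edge $v_iv_{i+1}$, and the final cycle only uses edges she has claimed), and Phase~I is unproblematic. The genuine gap is the central quantitative claim of Phases~II and~III, namely that ``the set of endpoints reachable by rotations has size $\Omega(n)$ because Breaker's $o(n^2)$ edges cannot kill the rotation expansion.'' Pósa's lemma gives a large endpoint set only if \emph{small} sets expand in the graph of usable chords, and killing the expansion of the relevant small set is cheap: to confine the rotation closure $S$ (i.e.\ to force $N_F(S)\subseteq S\cup S^+\cup S^-$) Breaker needs only about $|S|\cdot n$ edges. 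Since the game necessarily lasts $\Theta(n)$ rounds, Breaker's budget is $\Theta(bn)$, so he can in principle trap the closure in a set of size $\Theta(b)$ --- the same order as your slack $n-O(b)$, not a lower-order term. Once $|S|=O(b)$, the number of potential extension edges between reachable endpoints and $U$ is $O(b)\cdot O(b)$, and the candidate closing pairs of Phase~III are $O(b^2)$ rather than $\Omega(n^2)$, all well within Breaker's means. Moreover Breaker is adaptive: each rotation costs Walker at least two rounds during which Breaker places $2b$ edges, and he can aim them at the current endpoint, at the planned chords, or at the single edge from the targeted endpoint into $U$; your proposal offers no mechanism (control of Breaker-degrees along the path, a bound on the number of rotations per extension, a potential-function argument) to rule this out, and you yourself flag the unresolved issue that the total number of rotations must stay $O(n)$. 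So as written the argument does not go through; making rotation--extension robust against a bias of order $n/\ln^2 n$ is exactly the hard part, not a consequence of $e(B)=o(n^2)$.

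For comparison, the paper avoids rotations altogether: Walker first builds, in at most $6n$ rounds, an almost-spanning tree on $n-400b$ vertices with diameter $O(\ln n/\ln\ln n)$ (Proposition~\ref{diameter}); then, on a cleaned-up vertex set $V^*$ of size $n-O(b)$ where the free graph still has near-full minimum degree, she runs the randomized strategy of Theorem~\ref{random} (in the spirit of Ferber--Krivelevich--Naves) to claim most edges of a simulated random graph $G_{N,p}$ with $p=\omega(\ln N/N)$, using the low-diameter tree to reach each exposure vertex quickly; finally the local resilience of Hamiltonicity (Theorem~\ref{resilience_new}) guarantees a Hamilton cycle of $V^*$ inside her graph. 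If you want to salvage a rotation-based proof, you would need to add the degree-control and rotation-counting machinery that this random-graph-plus-resilience route is designed to replace.
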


Since Breaker can easily isolate $b$ vertices from Walker's graph, the result above is clearly best possible up to a constant factor in the non-leading term.
Finally, we consider the Walker-Breaker $C_k$-game in which Walker's goal is to create a cycle of length $k$.
We show that the largest bias $b$ for which Walker has a winning strategy in this game 
is of the same order as in the corresponding Maker-Breaker game, see \cite{BL2000}. That is, we prove the following theorem.

\begin{theorem}\label{H-game}
Let $k\geq 3$. Then the largest bias $b$ for which Walker wins the $(1:b)$ Walker-Breaker $C_k$-game
is of the order $\Theta(n^{\frac{k-2}{k-1}})$.
\end{theorem}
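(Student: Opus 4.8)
\emph{The upper bound.} Only one of the two inequalities requires new ideas; the other is essentially free. A $(1:b)$ Walker-Breaker game is a $(1:b)$ Maker-Breaker game with an additional restriction imposed on (the renamed) Maker, so any Breaker strategy winning the Maker-Breaker $C_k$-game also wins the Walker-Breaker $C_k$-game. By the theorem of Bednarska and \L uczak~\cite{BL2000}, applied to $H=C_k$ --- for which the relevant density is $m_2(C_k)=\frac{e(C_k)-1}{v(C_k)-2}=\frac{k-1}{k-2}$ --- Breaker wins the Maker-Breaker $C_k$-game once $b\ge Cn^{1/m_2(C_k)}=Cn^{(k-2)/(k-1)}$ for a suitable constant $C=C(k)$. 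Hence the same bound prevents Walker from creating $C_k$, which is the upper half of the theorem.

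\emph{The lower bound.} We must produce a constant $c=c(k)>0$ and a Walker strategy creating a $C_k$ whenever $b\le cn^{(k-2)/(k-1)}$. The plan is to reduce everything to one decisive move. Call Walker \emph{winning} if she stands on a vertex $y$ and there are more than $b$ distinct vertices $z$ with $yz$ unclaimed and with a $y$--$z$ path of length $k-1$ among Walker's already claimed edges; then on his move Breaker can kill at most $b$ of the edges $yz$, and on her next move Walker traverses a surviving one, closing a $C_k$ of the form $y,\dots,z,y$. So it suffices to force a winning position. Walker does this as follows. Over the first part of the game she grows, starting and returning to a single vertex $x$, a rooted ``bushy'' tree $T$ of depth $k-2$ in which every vertex is given branching at most $n/(2b)$; since the rules let her re-traverse her own edges at no combinatorial cost beyond one move, she can move freely inside $T$, and the whole tree has only $O\!\left((n/b)^{k-3}\right)=o(b)$ internal vertices, hence $O(b)$ edges. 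Such a tree can be arranged to have about $2b$ leaves precisely because $(n/2b)^{k-2}\ge 2b$ is equivalent to $b\lesssim n^{(k-2)/(k-1)}$ --- this is where the exponent of the theorem is born. Having built $T$ and returned to $x$, Walker plays the \emph{kill move}: she picks a fresh vertex $y$ with $xy$ unclaimed and with $yz$ unclaimed for all but a vanishing fraction of the leaves $z$ of $T$ (such $y$ exists because Breaker has claimed only $O(b^2)=o(n^2)$ edges, so a uniformly random fresh vertex is incident to only $o(b)$ of them), and she traverses $xy$. Now for every such leaf $z$ the path $y,x,\dots,z$ through $T$ has length $1+(k-2)=k-1$, so Walker is winning with more than $b$ threats, and closing one of them gives a $C_k$.

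\emph{The main obstacle.} The technical core is showing that Breaker cannot spoil the construction of $T$, and this is exactly what pins the threshold at $n^{1/m_2(C_k)}$. A single vertex can be \emph{shut down} by Breaker (all its incident edges claimed) in only about $n/b$ of his rounds, which for $b$ near $n^{(k-2)/(k-1)}$ and $k\ge 4$ is fewer rounds than a naive level-by-level construction of $T$ would occupy; so Walker cannot commit in advance to a fixed root $x$ and simply build. Instead she must (i) grow $T$ quickly and in a \emph{balanced, randomized} order, so that Breaker, who can anticipate only an $O(b/n)$ fraction of the vertices Walker will next expand, wastes almost all of his concentrated attacks, and (ii) maintain several alternative candidate roots --- more generally, keep the set of leaves robustly reachable from the eventual root, so that no small set of Breaker edges separates them --- abandoning any sub-structure that Breaker does manage to damage. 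Making these two points quantitative, i.e.\ proving that an $O(b)$-edge bushy tree of branching at most $n/(2b)$ can be grown to completion against $(1:b)$ Breaker precisely when $b\le cn^{(k-2)/(k-1)}$, and that the kill move always finds its fresh vertex, is the heart of the argument; for $k=3$ it degenerates to building one large star, but for $k\ge 4$ it is genuinely delicate and runs parallel to the difficulty in~\cite{BL2000}. Combined with the reduction above and the Bednarska--\L uczak bound, it yields Theorem~\ref{H-game}.
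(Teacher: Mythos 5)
Your upper bound is fine and is exactly what the paper relies on: a Walker is a restricted Maker, so the Bednarska--\L uczak Breaker strategy for the Maker--Breaker $C_k$-game with $b\geq Cn^{(k-2)/(k-1)}$ also beats Walker. The problem is the lower bound, where your argument is a plan rather than a proof: you yourself defer ``the heart of the argument'' (growing the bushy depth-$(k-2)$ tree against bias $b$ in a balanced, randomized order while maintaining alternative roots) and never supply it. This is not a routine technicality. Quantitatively, your tree has $\Theta(b)$ edges, so its construction takes $\Theta(b)$ rounds, during which Breaker claims $\Theta(b^2)$ edges; for $k\geq 4$ one has $b^2\gg n$, so Breaker can exhaust \emph{all} free edges at the fixed root $x$ long before the tree is finished (which already kills your final move, since it requires a fresh $y$ with $xy$ free and a walk from $x$ to $y$), and since shutting down a single vertex costs him only $n/b$ rounds he can disable $\Theta(b^2/n)$ of the root's $\approx n/(2b)$ children, which is more than all of them once $k\geq 5$ (and comparable at $k=4$). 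So the naive construction genuinely fails, and the randomization/robustness you gesture at is precisely the missing content; moreover it cannot literally be ``parallel to \cite{BL2000}'', because the Bednarska--\L uczak random strategy needs free access to random edges anywhere on the board, which the walk constraint forbids.

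For comparison, the paper closes exactly this gap by a different route: Walker first spends $O(n)$ rounds building an almost-spanning tree of diameter $O(k)$ (Proposition~\ref{diameter}), which makes every vertex reachable in $O(k)$ moves and thus neutralizes the walk constraint; she then passes to a half-sized vertex set on which few edges are claimed and runs the randomized strategy of Theorem~\ref{random} (an adaptation of Ferber--Krivelevich--Naves \cite{FKN2013}) with $p=CN^{-(k-2)/(k-1)}$, so that she effectively claims a $(1-o(1))$-fraction of a random graph $\mathcal{G}_{N,p}$; the local resilience of the property ``contains $C_k$'' at this density (Theorem~\ref{resilience_H}, from \cite{NSS2014}) then forces a copy of $C_k$ in her graph. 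If you want to salvage your explicit threat-tree approach, you would have to prove a tree-building lemma of comparable strength against bias $cn^{(k-2)/(k-1)}$; as written, the theorem's lower bound is unproved.
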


The rest of the paper is organized as follows. In Section~\ref{WB:preliminaries},
we collect some useful theorems connected to local resilience and random graphs.
In Section~\ref{WB:diameter} we show that Walker can create almost spanning graphs of small diameter
within a small number of rounds. Then, applying all these results, we prove
Theorems~\ref{Walker_cycle1} -- \ref{H-game} in Section~\ref{WB:cycle}. In Section~\ref{WB:remarks}, we conclude with some remarks and open problems.

\subsection{Notation}
Our graph theoretic notation is rather standard and follows \cite{West}. The vertex and edge sets of a graph $G$ are denoted by $V(G)$ and $E(G)$.
We write $v(G)$ for the number of vertices of $G$, and $e(G)$ for the number of edges. For $x\in V(G)$ and $A \subseteq V(G)$, $e_G(x,A)$ stands for the number of vertices in $A$ adjacent to $x$. When $A=V(G)$ we abbreviate $d_G(v)$ for $e_G(x,V(G))$. The {\em minimum degree} of $G$ is defined to be $\delta(G):=\min_{v\in V(G)} d_G(v)$. Given two sets $A, B \subseteq V(G)$,
not necessarily disjoint, we use $e_G(A,B)$ to denote the number of edges with one endpoint 
in $A$ and the other in $B$. We write
$G[A]$ for the subgraph of $G$ induced by a set $A \subseteq V(G)$. 

The {\em depth} of a rooted tree $T$, denoted by $\hbox{depth}(T)$, is the length of the longest path from the root to a leaf. 

Let $n$ be a positive integer and let $0 < p < 1$. The Erd\H{o}s-R\'enyi model $\mathcal{G}_{n,p}$ is a random subgraph $G$ of $K_n$,
obtained by retaining each edge of $K_n$ in $G$
independently at random with probability $p$. Let $\mathcal{P}$ be a graph property, and consider a sequence of probabilities $\{p(n)\}_{n=1}^{\infty}$. We say
that $\mathcal{G}_{n, p(n)}\in \mathcal{P}$ asymptotically almost surely, or a.a.s. for brevity, if the probability that $\mathcal{G}_{n, p(n)}\in \mathcal{P}$ tends to $1$ as $n$ goes to infinity.

To simplify the presentation, we often omit floor and ceiling signs whenever they are not essential. 
We also assume that the parameter $n$ (which always denotes the number of vertices of the host
graph) tends to infinity and therefore is sufficiently large whenever necessary. All our
asymptotic notation symbols $(O, o, \Omega, \omega, \Theta)$ are relative to this variable $n$.

Throughout the paper, the graph induced by Breaker's edges is denoted by $B$ and the graph induced by Walker's edges is denoted by $W$. We also write $F$ for the graph induced by edges which have not been claimed by either Breaker or Walker.

\section{Preliminaries}\label{WB:preliminaries}

In order to prove results for biased games, we will adapt a method that was motivated by
Bednarska and \L uczak~\cite{BL2000}, and recently extended by
Ferber, Krivelevich and Naves~\cite{FKN2013}. Providing Walker with a strategy
which mixes a randomized strategy with a deterministic one, we ensure that Walker can
generate a random graph throughout the game while ensuring that most
of its edges will be claimed by herself.  For that reason, we make use of the following definition, similarly to \cite{FKN2013}.

\begin{definition}
For $n\in\mathbb{N}$, let $\cP=\cP(n)$ be some graph property that is monotone increasing, and let $0\leq \varepsilon,p=p(n)\leq 1$. Then 
$\cP$ is said to be $(p,\varepsilon)$-resilient if a random graph $G\sim \gnp$ a.a.s. has the following property:
For every $R\subseteq G$ with $d_R(v)\leq \varepsilon d_G(v)$ for every $v\in V(G)$ it holds that $G\setminus R\in \cP$.
\end{definition}

Using this definition, the following theorems are known to be true.

\begin{theorem}[Theorem 1.1 in \cite{LS2012}]\label{resilience_new}
For every $\varepsilon >0$, there exists a constant $C=C(\varepsilon)$ such that for $p\geq \frac{C\ln n}{n}$ 
the property $\cP$ of containing a Hamilton cycle is $(p,\tfrac{1}{2} - \varepsilon)$-resilient.
\end{theorem}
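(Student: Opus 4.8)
\emph{Proof plan.} This is the local-resilience analogue of Dirac's theorem for random graphs at the optimal edge density, due to Lee and Sudakov~\cite{LS2012}; here is the route I would take. Fix $\varepsilon>0$, let $C=C(\varepsilon)$ be a large constant, put $p\ge C\ln n/n$ and sample $G\sim\gnp$. The plan is first to isolate a package of deterministic pseudorandom properties of $G$ that a.a.s.\ hold and, crucially, remain useful after one deletes up to half the edges at every vertex, and then to run P\'osa's rotation--extension technique on what is left.

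For the first part I would combine Chernoff bounds with union bounds over vertex subsets --- affordable precisely because a single bad event has probability $e^{-\Theta(\varepsilon^2 np)}=n^{-\Theta(\varepsilon^2 C)}$ and $C$ is large relative to $\varepsilon^{-2}$ --- to show that a.a.s.\ $G$ simultaneously satisfies: $d_G(v)=(1\pm\varepsilon/20)\,np$ for every $v$; every $S$ with $|S|\le\alpha n$, for a suitable $\alpha=\alpha(\varepsilon)>0$, spans at most $\tfrac{\varepsilon}{100}\,|S|\,np$ edges; every $S$ with $|S|\le n/2$ has $e_G(S,V(G)\setminus S)\ge(1-\varepsilon/20)\,|S|\,np$; and a two-sided edge-distribution estimate $\big|\,e_G(A,B)-|A|\cdot|B|\cdot p\,\big|\le\tfrac{\varepsilon}{100}(|A|+|B|)\,np$ for all disjoint $A,B$. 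Now let $R\subseteq G$ be arbitrary with $d_R(v)\le(\tfrac12-\varepsilon)\,d_G(v)$ for all $v$ and put $G'=G\setminus R$. Since the number of edges of $R$ meeting any set $S$ is at most $\sum_{v\in S}d_R(v)\le(\tfrac12-\varepsilon)(1+\varepsilon/20)\,|S|\,np$, the degree estimate yields $\delta(G')\ge(\tfrac12+\tfrac\varepsilon2)\,np$, and the sparsity and edge-count estimates survive (with slightly worse constants) to give: $G'$ is connected, and every $S$ with $|S|\le\alpha'n$, for some $\alpha'=\alpha'(\varepsilon)>0$, satisfies $|N_{G'}(S)\setminus S|\ge 2|S|$.

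Next I would apply P\'osa's rotation--extension technique to $G'$. Taking a longest path $P$ in $G'$, the minimum-degree bound forces both ends of $P$ to have all their neighbours on $P$; the small-set expansion then shows that one round of rotations from a fixed end of $P$ produces a set of more than $\alpha'n$ reachable endpoints, and a second round gives each such endpoint a set of more than $\alpha'n$ partners. To finish one needs an edge of $G'$ joining a rotation-endpoint to one of its partners: such an edge closes a longest path into a cycle of the same order, and connectivity of $G'$ then forces $P$ to span $V(G')$, so $P$ closes up to a Hamilton cycle. Establishing the existence of that closing edge is the one step that does not follow from the crude estimates above.

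I expect that step to be the main obstacle. In the ordinary, non-resilient proof one writes $p=p_1+p_2$ and uses the untouched edges of an independent $\mathcal{G}_{n,p_2}$ to ``boost'' a long path into a Hamilton cycle; here $R$ is chosen only after all of $G$ has been revealed, so no fresh randomness is available and the closing argument must rely solely on deterministic properties of $G$ that withstand the loss of half the degree at every vertex. One cannot simply invoke a resilient lower bound on $e_{G'}(A,B)$ between two linear-sized sets $A,B$, since a vertex's deletion budget of roughly $(\tfrac12-\varepsilon)np$ already accounts for all its edges into any set of size below about $n/2$; a finer rotation scheme is therefore needed --- for instance one passing the vertices through a small reservoir and using edges incident to it, where the total deletion budget is comparatively tiny. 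Calibrating the pseudorandom package so that it is at once strong enough to drive the rotations and robust enough to survive the deletion is exactly what forces the density $\Theta(\ln n/n)$ with $C=C(\varepsilon)$ large, and it is the technical heart of the Lee--Sudakov argument.
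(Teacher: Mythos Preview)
The paper does not prove this theorem at all: it is quoted verbatim as Theorem~1.1 of Lee and Sudakov~\cite{LS2012} and used as a black box (together with Theorem~\ref{resilience_H}) to feed into Theorem~\ref{random}. There is therefore nothing in the present paper to compare your argument against.

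That said, your sketch is a faithful high-level outline of the Lee--Sudakov proof: the pseudorandom package plus P\'osa rotation--extension is exactly their framework, and you have correctly located the genuine difficulty, namely that the adversary sees all of $G$ before choosing $R$, so no sprinkling is available and the closing edge must be found deterministically. Your proposal remains a plan rather than a proof precisely at that point --- you describe the obstacle accurately but do not overcome it. In~\cite{LS2012} this is handled not by a single reservoir trick but by an iterated rotation scheme that builds, for each of linearly many first endpoints, a linear-sized set of second endpoints, and then exploits a carefully stated bipartite edge-distribution property of $G$ (robust under the deletion because the total $R$-degree into a small set is controlled) to guarantee a closing edge between the two endpoint families. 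If you want to turn your outline into a proof, that double-counting step between the two endpoint sets is what you would need to make precise.
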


\begin{theorem}[Corollary of Theorem 15 in \cite{NSS2014}]\label{resilience_H}
Let $k\ge 3$. Then there exist constants $C,\gamma>0$ such that for  
$p\ge Cn^{-\frac{k-1}{k-2}}$ the property $\cP$ of containing a copy of 
$C_k$ is $(p,\gamma)$-resilient.
\end{theorem}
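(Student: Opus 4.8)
The plan is to pass from $G\sim\gnp$ (with $p\ge Cn^{-(k-2)/(k-1)}$) to the $k$-uniform hypergraph $\mathcal H$ on vertex set $E(G)$ whose hyperedges are the edge-sets of the copies of $C_k$ in $G$, and to show that a.a.s.\ these copies are spread over $E(G)$ so evenly that no sparse $R$ can meet all of them. For $e\in E(G)$ write $d_{\mathcal H}(e)$ for the number of copies of $C_k$ through $e$, so that $\sum_e d_{\mathcal H}(e)=k|\mathcal H|$ and $\sum_{e\ni v}d_{\mathcal H}(e)$ equals twice the number of copies of $C_k$ through $v$. If $R\subseteq G$ satisfies $d_R(v)\le\gamma d_G(v)$ for all $v$, then every copy of $C_k$ destroyed by $R$ contains an edge of $R$, so the number of copies killed is at most
$$\sum_{e\in R}d_{\mathcal H}(e)=\frac12\sum_v\sum_{e\in R,\,v\in e}d_{\mathcal H}(e)\le\frac12\sum_v g_\gamma(v),$$
where $g_\gamma(v)$ denotes the sum of the $\lfloor\gamma d_G(v)\rfloor$ largest values of $d_{\mathcal H}(e)$ over edges $e$ incident with $v$. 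Since $\sum_v\sum_{e\ni v}d_{\mathcal H}(e)=2k|\mathcal H|$, it therefore suffices to show that for a suitable constant $\gamma>0$, a.a.s.\ the inequality
$$g_\gamma(v)\le\frac{1}{4k}\sum_{e\ni v}d_{\mathcal H}(e)$$
holds for every $v\in V(G)$; this forces at least $\tfrac34|\mathcal H|>0$ copies of $C_k$ to survive in $G\setminus R$, which is exactly $(p,\gamma)$-resilience.

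I would first handle the tight density $p=\Theta(n^{-(k-2)/(k-1)})$, which is the hardest regime. The first-moment computations give that, in expectation, a fixed edge lies in $\Theta(n^{k-2}p^{k-1})=\Theta(1)$ copies of $C_k$, a fixed vertex lies in $\Theta(n^{k-1}p^{k})=\Theta(np)$ copies, and $\Exp|\mathcal H|=\Theta(n^kp^k)=\Theta(pn^2)$; moreover, since two copies of $C_k$ sharing an edge typically share nothing else, $d_{\mathcal H}(e)$ behaves, for each fixed $e$, like a Poisson variable of bounded mean, so in particular $\Exp[d_{\mathcal H}(e)^2]=\Theta(1)$. The plan is to upgrade these expectations to the following statements, valid a.a.s.\ simultaneously for all $v$: (i) $d_G(v)=(1+o(1))np$; (ii) the number of copies of $C_k$ through $v$ is at least $c_1\,np$ for a constant $c_1>0$; and (iii) $\sum_{e\ni v}d_{\mathcal H}(e)^2\le c_2\,np$ for a constant $c_2$. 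Each of these is a concentration statement for a bounded-degree polynomial of the edge indicators of $K_n$ with nonnegative coefficients whose mean is $n^{\Omega(1)}\gg\ln n$; so Chernoff/Janson bounds (for the lower tails in (i) and (ii)) and Kim--Vu polynomial concentration (for the upper tail in (iii)) give failure probability $e^{-n^{\Omega(1)}}=o(1/n)$, and a union bound over the $n$ vertices completes this step.

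Granting (i)--(iii), the key inequality is immediate: for any threshold $T>0$, each of the top $\lfloor\gamma d_G(v)\rfloor$ values of $d_{\mathcal H}(e)$ at $v$ is either smaller than $T$ or is $\ge T$ and hence bounded by $d_{\mathcal H}(e)^2/T$, so
$$g_\gamma(v)\le\gamma d_G(v)\cdot T+\frac1T\sum_{e\ni v}d_{\mathcal H}(e)^2\le\Big(\gamma(1+o(1))T+\frac{c_2}{T}\Big)np.$$
Choosing first a large enough constant $T$ so that $c_2/T\le c_1/(4k)$, and then a small enough constant $\gamma>0$ so that $\gamma(1+o(1))T\le c_1/(4k)$, gives $g_\gamma(v)\le\frac{c_1}{2k}np\le\frac1{4k}\sum_{e\ni v}d_{\mathcal H}(e)$ by (ii), as required. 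For $p$ strictly above the threshold the same argument applies verbatim, except that $d_{\mathcal H}(e)$ now has mean $\lambda=\lambda(n,p)\to\infty$ and one takes $T=\Theta(\lambda)$; in that case the degree sequence of $\mathcal H$ at each vertex is itself concentrated, so the bound is only easier and the admissible $\gamma$ does not shrink. (Alternatively, the statement is a quick consequence of Theorem~15 in~\cite{NSS2014}.)

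The main obstacle is (ii) and (iii): showing that the number of copies of $C_k$ through a vertex, and the vertex-local sum $\sum_{e\ni v}d_{\mathcal H}(e)^2$, enjoy exponential concentration with failure probability $o(1/n)$. For Janson's inequality this reduces to checking that the dependency sum $\bar\Delta$ — the sum, over ordered pairs of $C_k$-copies in $K_n$ that share at least one edge, of the probability that both occur in $G$ — is at most of the order of the relevant expectation; for the more delicate upper tail in (iii) one must control the analogous higher-order overlap terms. At the density $p\asymp n^{-(k-2)/(k-1)}$ both amount to a finite case analysis of the ways two copies of $C_k$ can intersect in $K_n$, with every intersection pattern contributing at most the order of the mean; this bookkeeping is the only real work.
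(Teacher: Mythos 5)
The paper does not prove this statement at all---it is quoted as a black-box corollary of Theorem~15 in \cite{NSS2014}---so your proposal is necessarily a different route: a self-contained, direct argument. Its deterministic skeleton is correct and is the standard ``well-spread hypergraph'' reduction: bounding the number of destroyed copies by $\tfrac12\sum_v g_\gamma(v)$, and controlling $g_\gamma(v)$ via the truncation $g_\gamma(v)\le \gamma d_G(v)T+\tfrac1T\sum_{e\ni v}d_{\mathcal H}(e)^2$, is exactly the kind of local Cauchy--Schwarz argument that underlies the resilience statements in \cite{NSS2014}. Your first-moment arithmetic at the critical density is right ($\Theta(1)$ copies per edge, $\Theta(np)$ per vertex), and the final choice of $T$ and then $\gamma$ closes the argument given (i)--(iii). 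One small but real point: the theorem as printed has the exponent inverted ($p\ge Cn^{-\frac{k-1}{k-2}}$); the intended hypothesis, which is what the paper actually uses in Section~4.3 ($p=CN^{-\frac{k-2}{k-1}}$) and what you prove, is $p\ge Cn^{-\frac{k-2}{k-1}}$. You silently corrected this; it should be flagged.

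The gap, such as it is, lies where you say it does: (ii) and (iii) are asserted, not proved. For (ii), Janson's lower-tail bound does give $e^{-\Omega(np)}=o(1/n)$ once one checks $\bar\Delta=O(\mu)$ for rooted $C_k$-counts at the threshold; this is a finite but nontrivial overlap analysis. For (iii), invoking Kim--Vu requires verifying that \emph{every} partial derivative of the degree-$(2k-1)$ polynomial $\sum_{e\ni v}d_{\mathcal H}(e)^2$ has expectation $O(1)$ (so that the deviation term $\sqrt{\Exp_0'\Exp_1'}\,\lambda^{2k-1}$ with $\lambda=\Theta(\ln n)$ is $o(np)$); upper tails of subgraph-count polynomials are exactly where such arguments usually break, and nothing in the proposal rules that out beyond the assertion that the overlap patterns behave. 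These checks do go through for cycles, but they constitute essentially all of the technical content of the statement, so as written the proposal is a correct and well-structured outline rather than a complete proof. Given that the intended proof is simply to cite \cite{NSS2014}, the honest comparison is: your route buys a self-contained and more transparent argument at the cost of redoing the second-moment bookkeeping that \cite{NSS2014} already supplies.
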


We also make use of the next theorem which roughly states the following: if 
after playing on $K_n$ for some rounds Walker has a graph which connects every pair of vertices
by short paths, while most edges are still free, then Walker can continue the game in such a way that
she obtains a graph with a pre-defined $(p,\varepsilon')$-resilient
property even if Breaker plays with a bias of order $\Theta(\frac{\varepsilon'}{p})$. 
As we might apply this result later to a smaller part of the given board (where this smaller part is identified with $K_n$),
we allow the mentioned paths to use edges not belonging to $K_n$. For the proof of Theorem \ref{random} this does not make a difference, as we will only need the property that these paths help Walker to reach any vertex within a small number of rounds.

\begin{theorem} 
\label{random}
For every positive $\varepsilon < 10^{-5}$ and every large enough $n\in\mathbb{N}$ the following holds.
Let $\frac{\ln n}{\varepsilon n} \le p=p(n)\leq 1$, let 
$d \in \mathbb{N}$ and let $\cP=\cP(n)$
be a monotone $(p,4\varepsilon)$-resilient graph property. Assume a $(1:\frac{\varepsilon}{30(d+1)p})$
Walker-Breaker game on $K_n$ is in progress, where
the graph $F$ of free edges satisfies $\delta(F)\geq (1-\varepsilon)n$,
and where Walker's current graph $W_0$ has the property that between every two vertices in $V(K_n)$ there is a path of length at most $d$.
Then, Walker has a strategy for continuing the game that creates a graph $W'\in \cP$.
\end{theorem}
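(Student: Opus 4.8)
The approach is the randomized-plus-deterministic strategy of Bednarska--\L uczak and Ferber--Krivelevich--Naves, adapted to the walk constraint. Before resuming the game Walker secretly samples a random graph $G\sim\gnp$ together with a uniformly random ordering of $E(G)$, and she will play so that her final graph $W'$ contains $G\setminus R$, where $R$ collects the edges of $G$ that Breaker manages to claim before Walker can reach them. Two a.a.s.\ properties of $G$ are used: since $pn\ge\ln n/\varepsilon$, a Chernoff bound and a union bound give that all degrees of $G$ are $(1\pm\tfrac1{10})pn$ (here the small constant $10^{-5}$ bounding $\varepsilon$ is exactly what makes the union bound go through at the threshold $p=\ln n/(\varepsilon n)$); and $(p,4\varepsilon)$-resilience of $\cP$ says that every $R\subseteq G$ with $d_R(v)\le 4\varepsilon d_G(v)$ for all $v$ satisfies $G\setminus R\in\cP$. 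As $\cP$ is increasing, it thus suffices to produce a strategy for which a.a.s.\ $d_R(v)\le 4\varepsilon d_G(v)$ for every $v$; fixing any realization of Walker's randomness on which this holds then yields the deterministic assertion.

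\textbf{The strategy.}
Walker runs through $E(G)$ in the sampled order. To acquire a target edge $uv$ from her current position $w$, she walks from $w$ to $u$ along a path of length at most $d$ inside $W_0$ --- these are her own edges and cannot be blocked, so this always succeeds --- and on her next move claims $uv$ if it is still free; if it is not, $uv$ is added to $R$ and she makes one harmless step along $W_0$. Hence each target costs at most $d+1$ of her moves, during which Breaker takes at most $(d+1)b=\tfrac{\varepsilon}{30p}$ edges, so over this ``random phase'' Breaker claims only $O(\varepsilon n^2)$ edges in total. Interleaved with it is a \emph{defence}: call a vertex $v$ \emph{dangerous} as soon as Breaker has claimed at least $\varepsilon pn$ of its $G$-edges (Walker knows $G$, so she can monitor this), and whenever some dangerous vertex still has a free incident $G$-edge, Walker suspends the random phase, walks to the currently most dangerous such vertex, and claims a free $G$-edge there. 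Walker is never stuck because she may always step along $W_0$; the hypothesis $\delta(F)\ge(1-\varepsilon)n$ is used only to ensure that at the outset at most $(1+o(1))\varepsilon pn$ of the $G$-edges at any vertex already belong to Breaker.

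\textbf{Analysis.}
Condition on the two good events for $G$. If $v$ never becomes dangerous then $d_R(v)<\varepsilon pn\le 4\varepsilon d_G(v)$. If $v$ does become dangerous, a timing argument is needed: once dangerous, $v$ keeps a free $G$-edge only until Walker has cleared all of them, and Breaker's attacking $v$ only pushes it up the priority order, so one shows Walker empties the free $G$-edges at $v$ within $O((d+1)pn)$ of her own moves devoted to $v$; in that many moves Breaker adds at most $O(\varepsilon n)$ further edges at $v$, of which --- the still-unprocessed pairs having independent $\mathrm{Bernoulli}(p)$ membership in $G$ at the instant Breaker picks them --- at most $c\varepsilon pn$ lie in $G$ for a small constant $c$. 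Hence $d_R(v)\le(1+c+o(1))\varepsilon pn<4\varepsilon d_G(v)$. It remains to check that the global accounting closes: at most $p$ times the number of Breaker's edges, i.e.\ $O(\varepsilon pn^2)$, edges of $G$ are ever lost, so there are $O(n)$ dangerous vertices and their combined extra demand on Walker's moves is $O((d+1)pn^2)$, which the bias $b=\tfrac{\varepsilon}{30(d+1)p}$ can absorb --- the factor $30(d+1)$ being chosen precisely so that the random phase together with all defences fits inside the game.

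\textbf{Main difficulty.}
The crux is the interplay of the two phases: ruling out a ``traffic jam'' in which Breaker pours his whole bias into a single vertex while Walker is occupied elsewhere, and showing that the number of lost $G$-edges at each vertex is concentrated uniformly over Breaker's adaptive play --- which calls for a martingale argument (the relevant $G$-statuses being revealed only when Walker processes them) rather than a union bound over Breaker's strategies. This is precisely where the priority-by-danger rule and the factor $30(d+1)$ in Breaker's bias are needed: the former compels Walker to respond to whichever vertex Breaker is really attacking, the latter leaves her enough spare moves to do so on top of generating $G$.
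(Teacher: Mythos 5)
Your high-level plan is the right one (random graph plus resilience, with a priority rule protecting attacked vertices), and it is in the same family as the paper's argument, but as written it has genuine gaps at exactly the points where the work lies. First, you sample $G\sim\mathcal{G}_{n,p}$ and an edge ordering \emph{in advance} and then assert that, at the moment Breaker claims a still-unprocessed pair, its membership in $G$ is an independent Bernoulli$(p)$ event, so that his $O(\varepsilon n)$ extra edges at a dangerous vertex hit $G$ only $\approx \varepsilon pn/30$ times. This is not justified: Walker's visible behaviour (which edges she claims, when she interrupts to defend, and the defence trigger itself, which is defined in terms of $G$-membership of Breaker's edges) is a function of the whole sample, so conditioning on the history does not leave the untouched pairs as independent Bernoulli$(p)$; you acknowledge a martingale argument is needed but do not give it, and this concentration statement, uniform over Breaker's adaptive play, is the heart of the proof. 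The paper avoids the issue structurally: coins are tossed only when Walker is sitting at the exposure vertex (the random graph is generated on the fly), so the number of ``type II failures'' at $v$ is immediately dominated by a binomial $Bin(1.2\varepsilon n,p)$, because by construction each edge at $v$ is exposed while $d_B(v)\le \varepsilon n/5$ and at most $\varepsilon n$ edges at $v$ were unavailable at the start. Second, your queueing claim --- that Walker ``empties the free $G$-edges at $v$ within $O((d+1)pn)$ of her own moves devoted to $v$'' even though Breaker may make many vertices dangerous and switch targets --- is asserted, not proved; it is precisely the traffic-jam issue you yourself flag as the main difficulty. The paper replaces this ad hoc scheduling analysis by a simulated auxiliary game $\mathit{MinBox}(n,4n,\tfrac p2,2b(d+1))$: the exposure vertex is always a free active box of maximum danger, and the quoted MinBox theorem (danger stays below $b'(\ln n+1)$) yields cleanly that every box $S_v$ becomes inactive, i.e.\ all edges at $v$ are already exposed, before $d_B(v)$ reaches $\varepsilon n/5$.

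A further, smaller but real, flaw is the derandomization step. ``Fixing any realization of Walker's randomness on which this holds'' does not yield a deterministic winning strategy: the good event depends on Breaker's play, and a realization that succeeds against one Breaker line may fail against another, while a winning strategy must beat all of them. The paper handles this correctly by contradiction: if Walker had no strategy to reach $\mathcal{P}$, then (finite perfect-information game) Breaker would have a strategy $S_B$ preventing it; the randomized strategy is then analysed against this fixed $S_B$ and succeeds with positive probability, a contradiction. Your write-up would need this determinacy argument, a rigorous deferred-decision/martingale treatment of the $G$-hits among Breaker's adaptively chosen edges (or a switch to on-line coin tossing as in the paper), and an actual analysis of the defence schedule before it could count as a proof.
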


Since this theorem is motivated by and follows very closely the argument for Theorem 1.5 in \cite{FKN2013}, its proof is postponed to Section \ref{WB:appendix}.



\section{Creating almost spanning graphs with small diameter}\label{WB:diameter}

Besides the previous results on $(p,\varepsilon)$-resilient properties, our results will also make use of Walker's ability to create a graph of small diameter covering almost the whole vertex set $V(K_n)=[n]$, within a small number of rounds. 

\begin{proposition}\label{diameter}
	For every large enough integer $n$ the following holds.
	Let $b \le \frac{n}{\ln^2 (n)}$ be a positive integer, and let
	$r=\frac{\ln n}{\ln \left(\frac{n}{200b}\right)}$. Then, in the biased $(1:b)$ Walker-Breaker game on $K_n$, Walker has
	a strategy to create a graph on $n-400b$ vertices, with diameter at most $2\lfloor r\rfloor+6$, within at most $6n$ rounds.
\end{proposition}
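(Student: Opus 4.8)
The plan is to have Walker build a spanning-in-bulk tree-like structure in stages, where at each stage she roughly multiplies the number of ``reached'' vertices by a factor of about $\frac{n}{200b}$, so that after about $r = \frac{\ln n}{\ln(n/200b)}$ rounds of doubling-type growth she has touched all but $O(b)$ vertices. Concretely, I would fix an arbitrary start vertex $v_0$ and think of Walker constructing a rooted tree $T$ of depth roughly $r$ rooted at $v_0$, where the key invariant is that every leaf currently in the tree has many free edges leading out to so-far-unvisited vertices. The diameter bound $2\lfloor r\rfloor + 6$ then comes for free: any two vertices in the final graph are each within distance $\lfloor r\rfloor + O(1)$ of the root through the tree, so the distance between them is at most $2\lfloor r\rfloor + O(1)$; the additive constant $6$ absorbs a bounded amount of ``repair'' overhead (e.g.\ extra edges Walker must traverse to move her token between branches of the tree, since she is constrained to follow a walk rather than teleport).

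The main structural step is controlling how Walker, who must move along a walk, can grow \emph{many} leaves of $T$ when she can only be at one vertex at a time. The standard trick here is a depth-first traversal of the tree she is building: Walker does a DFS on $T$, and whenever she is at a vertex $u$ that still needs more children, she picks a free edge $uw$ to a fresh vertex $w$, adds it to $T$, moves to $w$, processes $w$'s subtree, and backtracks along tree edges (which she already owns, so Breaker cannot block the backtracking). The cost of this is that each tree edge is traversed at most twice, so the total number of Walker-moves is at most $2e(T) = 2(v(T)-1) \le 2n$, comfortably within the $6n$ budget, with the remaining slack again covering bounded inefficiencies. At each leaf, Walker wants to add about $\frac{n}{200b}$ children; the point is that when she is at a vertex $u$, the set $S$ of already-visited vertices has size at most $n - 400b$ throughout the construction (we stop once we hit that many), so there are at least $400b$ unvisited vertices, and among the edges from $u$ to unvisited vertices Breaker has claimed at most $b \cdot (\text{number of Walker moves so far local to this step})$; a careful accounting of Breaker's budget shows that Breaker simply cannot afford to block more than a small fraction of the edges Walker needs across the whole tree, since the total number of edges Walker ever wants to use is $O(n)$ and Breaker claims only $O(bn)$ edges total, which is fine because each individual expansion needs only that $u$ retains $\Omega(n/b)$ free edges to fresh vertices at the moment Walker is there.

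More precisely, the level-by-level bookkeeping is: at level $i$ the tree has about $(n/200b)^i$ vertices, so the total over all levels up to $r$ telescopes to $O((n/200b)^r) = O(n)$ vertices, matching the choice of $r$; by level $\lfloor r \rfloor$ or so the number of touched vertices first reaches $n - 400b$, at which point Walker stops expanding. The delicate quantitative point, and the step I expect to be the main obstacle, is verifying that at \emph{every} vertex $u$ where Walker wants to branch, enough of the $u$-to-fresh edges are still free. One has to argue that, because Breaker has a total budget of $b$ edges per round and the whole construction lasts at most $6n$ rounds, Breaker has claimed at most $6bn$ edges in total; these could in principle be concentrated to sabotage the low levels of the tree (where there are few vertices and Breaker can aim all his edges at them), so one must either (i) have Walker randomize or diversify her choice of which fresh vertex to move to, so that Breaker cannot predict and pre-block, or (ii) use a potential/counting argument showing that even adversarially placed, $6bn$ Breaker-edges cannot reduce below $\frac{n}{200b}$ the free-fresh-degree of a vertex that Walker needs to expand, using that the fresh set always has size $\ge 400b$ and only $O(n)$ expansions happen in total. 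I would handle the early levels (where $u$ might have few fresh neighbors relative to Breaker's concentrated budget) separately and more crudely --- e.g.\ Walker just needs to reach \emph{some} modest number of vertices in the first couple of levels, and the constant $200$ in $r$ and the additive $400b$ and the $+6$ in the diameter are all chosen with enough room to absorb this.

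Finally, I would double-check the edge cases forced by the hypotheses: $b \le n/\ln^2 n$ guarantees $n/200b \ge \ln^2 n / 200 \to \infty$, so $r$ is well-defined and bounded (indeed $r = O(\ln n / \ln\ln n)$ in the worst case, and $r = O(1)$ when $b = n^{1-\Omega(1)}$), and it guarantees $(n/200b)^r = n$ exactly by the definition of $r$, so the tree really does reach $\Theta(n)$ vertices; the $6n$-round and $n - 400b$-vertex targets then follow from the DFS analysis above.
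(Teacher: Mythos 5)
Your high-level plan --- grow a bounded-depth tree with branching about $\frac{n}{200b}$, bound the diameter through the root, and charge the walk to $2e(T)$ --- has the same shape as the paper's argument, but the step you yourself flag as ``the main obstacle'' is exactly the heart of the proof, and neither of your two options resolves it as stated. The crude counting in your option (ii) does not suffice: Breaker has $\Theta(bn)$ edges in total while fully blocking one prescribed vertex costs him only about $n$ of them, so he can completely saturate $\Theta(b)$ vertices of his choosing; and with your DFS ordering (children added one at a time, with the entire previous child's subtree built in between) he can time this against exactly those ancestors that are still waiting for further children. Concretely, a subtree hanging below one child of the root has about $\left(\frac{n}{200b}\right)^{r-1}\approx 200b$ vertices, so while it is being built Breaker collects roughly $400b^2$ edges; once $b=\Omega(\sqrt n)$ this exceeds $n$, so he can block the root (or any low-level ancestor) before Walker ever returns for its next child, destroying either the branching factor or the depth bound. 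What is needed --- and what the paper does --- is to attach all $\sim\frac{n}{100b}$ children of a center in consecutive rounds (so Breaker gains only about $\frac{n}{50}$ edges during one attachment), to expand only \emph{half} of the leaves of each level, chosen greedily with minimum Breaker-degree, and to prove the averaging bound that the number of rounds up to level $j$ is $O\big((\frac{n}{200b})^j\big)$, whence the average Breaker-degree over the $\ge |L_{j-1}|/2$ unused leaves is at most $0.1n$. Your sketch contains neither the ``half the leaves'' slack, nor the min-degree selection, nor the round-count recursion that makes this averaging work; the alternative ``randomize'' route is not developed at all.

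A second gap is the endgame. Reaching $n-400b$ visited vertices is not automatic once the fresh set shrinks to size $O(b\,\mathrm{polylog}\,n)$: there, a single return walk of length $\sim r$ already lets Breaker claim $\Theta(rb)$ edges, enough to wipe out all free edges from the intended center into the fresh set, and each new leaf costs Walker two rounds against Breaker's $2b$ edges, so the untouched set can only be driven down geometrically, $|W_{i+1}|\le\big(1-\frac{1}{4b+2}\big)|W_i|$, never to zero. This is precisely why the paper needs its Stages II and III (first centers chosen among all tree vertices minimizing Breaker-degree into the untouched set, then centers reached by a \emph{single} free edge, with free degree $\ge \frac{9n}{10}$ and Breaker-degree $\le 100b$), and it is where the constant $400b$ and the $+6$ in the diameter actually come from. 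Your proposal treats ``stop once we hit $n-400b$'' as free, so as written the argument does not close.
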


{\bf Proof of Proposition \ref{diameter}}
Whenever necessary, let us assume that $n$ is large enough.
By assumption it follows that 
$r=O\left(\frac{\ln n}{\ln \ln n}\right).$
For simplicity of notation we set $c_1=\frac{1}{2\lfloor r \rfloor+2}\leq \frac{1}{4}$
and $c_2=2\lfloor r \rfloor+2$.

The main idea is to create a tree in some kind of breadth first search manner, by iteratively
attaching stars to the vertices of a smaller tree.
At a given moment during the game, assume that $T$ is the tree which Walker has created so far and that Walker's current position is $v\in V(T)$. Then, by {\em attaching a star} of size $s\in \mathbb{N}$ to $v$ we mean a strategy of creating a star $S$ with center $v$ in the following way: as 
long as the star $S$ has not reached  
size $s$, Walker proceeds to an untouched vertex $w$ if her current position is $v$ (thus enlarging $S$ by one edge), or she proceeds to the vertex $v$ if her current position is a leaf of $S$.
Notice that Walker can easily follow this strategy 
if there are at least $(2b+1)s$ vertices $w\in V\setminus V(T)$ such that the edge $vw$ is free.

In the following we describe a strategy for Walker
in the $(1:b)$ game on $K_n$. Afterwards, we show that she can follow that strategy,
and while doing so, she creates a tree as required.
Initially set $L_0=\{v_0\}$, where $v_0$ is Walker's start vertex. We consider $v_0$ to be 
the root of Walker's tree $T$.
Initially, $V(T)=L_0=\{v_0\}$.
Now, Walker plays according to three stages, where 
in the first stage she progressively enlarges her tree
by attaching large stars to the leaves of her current tree $T$, while in the other two stages we also take non-leaves into account.

{\bf Stage I.} Walker starts by creating a tree
$T$ of size $c_1n$ and depth at most $\frac{c_2}{2}$.
Stage I is split into several phases.

{\em Phase 1.} Walker attaches a star of size $\frac{n}{200b}$ to the root $v_0$. Once she is done with the star, she proceeds to Phase~2.

{\em Phase $j>1$.} Let $L_{j-1}$ be the set of leaves of Walker's tree immediately at the end of Phase~$j-1$.
In Phase $j$, Walker enlarges her tree by attaching
stars of size $\frac{n}{100b}$ to at most $\frac{|L_{j-1}|}{2}$ vertices of $L_{j-1}$, but only as long as
$v(T)\leq c_1n$. She proceeds as follows:

If she already made $\frac{|L_{j-1}|}{2}$ star attachments
in this phase, then Walker proceeds with Phase $j+1$.
Otherwise, assume that Walker already finished $i\leq \frac{|L_{j-1}|}{2} -1$ stars. Then she claims her $(i+1)^{\text{st}}$ star as follows:
She identifies a vertex $v\in L_{j-1}$
which is not the center of one of her stars yet and which has the smallest possible Breaker-degree among all such vertices. Within at most $2j$ rounds 
she walks to this vertex $v$ by using the edges of her current tree $T$. If  
$v(T)\leq c_1n-\frac{n}{100b}$, Walker then attaches
a star of size $\frac{n}{100b}$ to $v$, and she repeats Phase $j$. Otherwise, if 
$c_1n-\frac{n}{100b}<v(T)<c_1n$, 
Walker attaches a star of size 
$c_1n-v(T)$, and proceeds to Stage II.
(We will see later that the number of phases is at most $\frac{c_2}{2}$.)

{\bf Stage II.} From now on, let $T_1$ denote the tree which Walker has constructed 
by the end of Stage I,
with $v(T_1)=c_1n$.
Throughout Stage II, Walker maintains a tree
$T$ of depth at most $\frac{c_2}{2}+1$,
by attaching large stars to 
the vertices of $T_1$. Assume Walker already attached $i-1$ such stars, and
now she plays according to Stage II for the $i^{\text{th}}$ time.
Let $V_i$ be the set of vertices that are not contained in her tree so far.
If $|V_i|\leq 50c_2b$, then Walker proceeds to Stage III.
Otherwise, she fixes an arbitrary vertex
$z_i\in V(T_1)$
with $d_F(z_i,V_i)>0$ (the existence will be proven later), which minimizes $d_B(z_i,V_i)$.
Then, in the following at most $c_2+2$ rounds, Walker walks to $z_i$ using the edges of her current tree.
Afterwards, as long as possible, Walker creates a star with center $z_i$
and leaves in $V_i$. That is, as long as possible, she claims a free edge between $z_i$ and $V_i$
in every second round (by alternately walking between $z_i$ and distinct vertices from $V_i$).
When this is no longer possible, she repeats Stage II.

{\bf Stage III.} From now on, let $T_2$ denote the tree which Walker has constructed 
by the end of Stage~II, with $v(T_2)\geq n-50c_2b$.
At the beginning of Stage III, Walker walks to a vertex 
$z_0\in V(T_2)$ with $d_F(z_0) \geq \frac{10n}{11}$, within at most $c_2+2$ rounds. 
Then, throughout Stage III, Walker maintains a tree
$T$ of depth at most $\frac{c_2}{2}+2$, by
attaching large stars to the vertices of $V(T_2)$. 
Assume that Walker already attached $i-1$ such stars, and her current position is $z_{i-1}$, for some $i\geq 1$.
Let $W_i$ be the set of vertices that have not been visited by Walker so far.
If $|W_i|\leq 400b$, then Walker stops playing.
Otherwise, she continues to attach stars to her current tree.
For this, she identifies a vertex 
$z_i\in V(T_2)$ such that (i) $z_{i-1}z_i$ is a free edge,
(ii) $d_F(z_i)\geq \frac{9n}{10}$ and (iii) $d_B(z_i) \leq 100b$. She immediately walks to $z_i$ using the edge $z_{i-1}z_i$. Afterwards, Walker creates a star of size $\frac{|W_i|-d_B(z_i,W_i)}{2b+1}-1$ with center $z_i$ and with leaves in $W_i$. She then repeats Stage III.

Obviously, if Walker can follow the strategy until $|W_i|\leq 400b$ holds at some point in Stage~III
and such that Stage I consists of at most $\frac{c_2}{2}$ phases,
then she creates a tree as required. Indeed, under these assumptions her final tree
needs to have at least $n-400b$ vertices, and diameter at most $2(\frac{c_2}{2}+2)=2\lfloor r\rfloor + 6$, as the latter increases by at most 2 in every phase of Stage I, in Stage II and in Stage III.
Thus, it remains to prove that she can follow the proposed strategy, and that 
the above assumptions are satisfied at some point in the game, but after at most $6n$ rounds. We start with the following useful claim.

\begin{claim}\label{cl:rounds}
Assume that Walker can follow her strategy and let $T$ be her tree at any moment during the game. Let 
$d_{\geq 2}(T)$ denote the number of vertices $v$ such that $d_T(v)\geq 2$. Then $2e(T)+2 \depth(T)\cdot d_{\geq 2}(T)$ is an upper bound on the number of rounds played so far.
\end{claim}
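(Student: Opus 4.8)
The plan is to bound the number of rounds by the length of the walk Walker has traversed so far: she claims exactly one edge per round, so this number $R$ equals the length of her walk, that is $R=\sum_{e\in E(T)}t(e)$, where $t(e)$ is the number of times the edge $e$ has been traversed. I would split the moves made so far into \emph{productive} moves, in which Walker traverses an edge of $T$ for the first time (thereby claiming it), and \emph{repeat} moves. There are exactly $e(T)$ productive moves. Inspecting the strategy, every repeat move is either a \emph{return} move, made inside a star-attachment, where Walker has just claimed a new leaf edge $vw$ of a star with centre $v$ and walks back from $w$ to $v$, or a \emph{navigation} move, made while Walker walks along edges of her current tree towards the centre of the next star she will attach (including the walk to $z_0$ at the start of Stage~III and the walk to the chosen vertex $z_i$ at the start of each iteration of Stage~II).

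Let $N$ be the number of stars Walker has attached so far. In the attachment of a star of size $s$ there are exactly $s-1$ return moves, so the number of return moves is $\sum_{\text{stars}}(s_{\text{star}}-1)\le e(T)-N$, using that the edges of distinct stars are distinct edges of $T$; hence productive and return moves together number at most $2e(T)-N$. A navigation is a walk between two vertices of Walker's current tree $T'$, so it has length at most $2\,\depth(T')\le 2\,\depth(T)$, since the strategy only ever adds edges and the depth never decreases. The number of navigations is at most $N$: Phase~1 of Stage~I needs none (Walker begins at $v_0$), every later star of Stage~I and every star of Stage~II is preceded by exactly one navigation, and in Stage~III the step along $z_{i-1}z_i$ is itself a productive move, so there the only extra navigation is the single walk to $z_0$. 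Finally, each star has its own centre and, as discussed below, no vertex is the centre of two stars, so $N\le d_{\ge 2}(T)$. Altogether,
\[
R \;\le\; (2e(T)-N) + N\cdot 2\,\depth(T) \;\le\; 2e(T)+2\,\depth(T)\cdot d_{\ge 2}(T),
\]
which is the asserted bound.

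The step that needs care — and the main obstacle — is the assertion that no vertex is chosen as a star centre twice, which is what gives $N\le d_{\ge 2}(T)$. Within one phase of Stage~I this is built into the rule (Walker picks a vertex of $L_{j-1}$ that is not yet the centre of one of her stars); across the phases of Stage~I it holds since a vertex carrying a star is no longer a leaf and hence never lies in a later $L_j$; within Stage~II it holds because attaching a star at $z_i$ ``as long as possible'' leaves $z_i$ with no free edge into the set of not-yet-reached vertices, which only shrinks afterwards, so $z_i$ never again satisfies the selection rule $d_F(\cdot,V_{i'})>0$; within Stage~III it holds since $z_{i-1}z_i$ must be free; and a vertex used as a centre in an earlier stage is likewise excluded from selection later, again because by then it has no free edge into the relevant set of not-yet-reached vertices. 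Granting this, the remaining ingredients — the split of the walk into star-attachment blocks and navigation blocks, the $2\,\depth$ bound for one navigation, and the count of $s-1$ return moves per star — follow directly from the description of the strategy.
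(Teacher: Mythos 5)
Your decomposition is essentially the paper's own proof, just written out in more detail: the paper also splits the rounds into star-attachment moves (each tree edge walked at most twice, giving $2e(T)$) and relocation moves between attachments (each bounded by $2\depth(T)$ via the tree in Stages I--II, or a single move in Stage III), and then identifies the number of relocations with $d_{\geq 2}(T)$ via the star centres. Your extra bookkeeping (Phase 1 needing no navigation, the walk to $z_0$ being compensated by that saving, the Stage III step $z_{i-1}z_i$ being a newly claimed edge) is fine and in fact slightly more careful than the paper, which states the count in one line.

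The one place where you go beyond the paper --- justifying that no vertex is ever the centre of two stars --- is also where your argument is not sound as stated. For centres from Stage I the reason you give is false: a Stage I star is terminated because it reaches the prescribed size $\frac{n}{100b}$ (or $c_1n-v(T)$), not because the centre has run out of free edges to untouched vertices, so such a centre can perfectly well satisfy $d_F(\cdot,V_i)>0$ later and be the minimizer of $d_B(\cdot,V_i)$ that the Stage II rule tells Walker to pick. Similarly, the Stage III selection conditions (i)--(iii) refer to the free edge $z_{i-1}z_i$, to $d_F(z_i)$ and to $d_B(z_i)$, but never to free edges into $W_i$, so ``no free edge into the not-yet-reached set'' excludes nothing there; only the exhaustion argument for Stage II centres within Stage II is correct. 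To be fair, the paper's proof does not address this either --- it simply asserts that $d_{\geq 2}(T)$ ``is precisely the number of star attachments made so far'', i.e.\ it implicitly reads the strategy as never re-using a centre --- so your proof matches the paper's in substance and rigor; but you should either weaken your claim to that same implicit reading (each attachment is charged to its centre, and centres are understood to be fresh), or give a correct argument for the cross-stage cases rather than the one you wrote.
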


\begin{proof}
Walker's strategy consists of two different actions. On one hand, she makes star attachments, walking twice along each edge added to the tree. 
On the other hand, after finishing a star, she moves to a vertex, which becomes the next center for a star attachment. The latter is done either by traversing
edges of $T$ (in Stage I and II), which needs at most $2\cdot\text{depth(T)}$ rounds, or by doing just one move (in Stage III). As $d_{\geq 2}$ is precisely the number of star attachments made so far, the claim follows.
\end{proof}

{\bf Stage I.} We now show that Walker can follow the strategy of Stage I until her tree has size $c_1n$, while the depth is not larger than $\frac{c_2}{2}=\lfloor r\rfloor +1$. To do so, let $T_j$ denote Walker's tree at the end of Phase $j$ (and observe that $T_j$ has depth $j$), let 
$L_j$ denote the set of leaves of $T_j$ (as stated in the strategy description), and let $R_j$ denote the number of rounds until the end of Phase $j$ (including all previous phases). The following claim provides us with some useful inequalities for the analysis of Walker's strategy.

\begin{claim} 
	\label{diameter_first-claim_}
	Let $n$ be large enough.
	As long as Walker can follow the strategy of Stage I, the following holds for every Phase $j$:
	\begin{enumerate}
		\item $|L_0|=1$ and $|L_j| \leq \frac{n}{200b} |L_{j-1}|$. Moreover, $|L_j| = \left(\frac{n}{200b}\right)^j$ if $v(T_j)< c_1n$.
		\item $R_j\leq 2e(T_j)+2j\cdot v(T_{j-1})<5 \cdot \left(\frac{n}{200b}\right)^j$.
	\end{enumerate}
\end{claim}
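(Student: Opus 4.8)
The plan is to prove both items by induction on the phase number $j$, carefully tracking the structure that Walker's strategy imposes on the tree at the end of each phase.

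\textbf{Item 1.} For the base case, $|L_0|=1$ by definition, and after Phase 1 Walker has attached a single star of size $\frac{n}{200b}$ to $v_0$, so $|L_1|=\frac{n}{200b}=\left(\frac{n}{200b}\right)^1$ and the tree $T_1$ has depth $1$. For the inductive step, recall that in Phase $j>1$ Walker attaches stars of size $\frac{n}{100b}$ to \emph{at most} $\frac{|L_{j-1}|}{2}$ vertices of $L_{j-1}$. Each such attachment turns one old leaf into an internal vertex and creates $\frac{n}{100b}$ new leaves, while the leaves of $L_{j-1}$ that are not chosen as centers remain leaves. Hence
\[
|L_j| \;\le\; \frac{|L_{j-1}|}{2}\cdot \frac{n}{100b} \;+\; |L_{j-1}| \;=\; |L_{j-1}|\left(\frac{n}{200b}+1\right)\;\le\; \frac{n}{200b}\,|L_{j-1}|,
\]
where the last inequality uses that $\frac{n}{200b}\ge \frac{\ln^2 n}{200}$ is large. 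For the ``moreover'' part: if $v(T_j)<c_1n$, then Walker was never forced to curtail a star in Phase $j$ (the curtailing only happens when $v(T)$ first exceeds $c_1n-\frac{n}{100b}$), so she made exactly $\frac{|L_{j-1}|}{2}$ attachments of the full size $\frac{n}{100b}$, and similarly in all earlier phases; a trivial induction then gives $|L_j|=\left(\frac{n}{200b}\right)^j$, and the old leaves not chosen as centers in Phase $j$ lie at depth $j-1$ and are no longer leaves of... wait --- one must be slightly careful: leaves of $L_{j-1}$ not chosen remain leaves of $T_j$. To get the clean power formula one uses that $\frac12|L_{j-1}|\cdot\frac{n}{100b}$ already dominates and that the strategy is designed so the half-not-chosen leaves get absorbed in later phases; alternatively, and more cleanly, one notes $|L_j|\ge \frac{|L_{j-1}|}{2}\cdot\frac{n}{100b}=\frac{n}{200b}|L_{j-1}|$ together with the upper bound forces equality when no curtailing occurs \emph{and} when $|L_{j-1}|=\left(\tfrac{n}{200b}\right)^{j-1}$ is replaced by the precise count coming from the ``new leaves only'' accounting. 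I would adopt the convention that $L_j$ counts only the freshly created leaves when $v(T_j)<c_1n$, matching the strategy's intent, which makes the formula exact.

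\textbf{Item 2.} Here I apply Claim~\ref{cl:rounds}. At the end of Phase $j$ the tree $T_j$ has depth exactly $j$, and the number of internal (degree $\ge 2$) vertices is at most $v(T_{j-1})$, since every star center created up to the end of Phase $j$ lies in $T_{j-1}$ (the centers in Phase $j$ are old leaves of $L_{j-1}\subseteq V(T_{j-1})$). Claim~\ref{cl:rounds} then gives $R_j\le 2e(T_j)+2j\cdot d_{\ge 2}(T_j)\le 2e(T_j)+2j\cdot v(T_{j-1})$. It remains to bound the right-hand side by $5\left(\frac{n}{200b}\right)^j$. Using $e(T_j)=v(T_j)-1<v(T_j)\le \sum_{i\le j}|L_i|\le \sum_{i\le j}\left(\frac{n}{200b}\right)^i$, a geometric-series estimate (with ratio $\frac{n}{200b}\ge 2$, so the sum is at most $2\left(\frac{n}{200b}\right)^j$) yields $2e(T_j)\le 4\left(\frac{n}{200b}\right)^j$, and similarly $2j\cdot v(T_{j-1})\le 2j\cdot 2\left(\frac{n}{200b}\right)^{j-1}=\frac{4j}{n/(200b)}\left(\frac{n}{200b}\right)^{j}$, which is $o\!\left(\left(\frac{n}{200b}\right)^j\right)$ since $j\le r=O(\ln n/\ln\ln n)$ while $\frac{n}{200b}\ge \frac{\ln^2 n}{200}$. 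Adding the two contributions gives $R_j<5\left(\frac{n}{200b}\right)^j$ for $n$ large.

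\textbf{Main obstacle.} The delicate point is the exact power formula $|L_j|=\left(\frac{n}{200b}\right)^j$ in Item~1: one must argue that, as long as $v(T_j)<c_1n$, no star is ever truncated and the bookkeeping of which vertices count as leaves is done consistently (in particular that the half of $L_{j-1}$ not chosen as centers does not spoil the recursion). Everything else is a routine combination of Claim~\ref{cl:rounds} with geometric-sum bounds, using that $\frac{n}{200b}$ is polylogarithmically large while the number of phases $j$ is only $O(\ln n/\ln\ln n)$.
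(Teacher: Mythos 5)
Your overall route is the paper's. For part 2 you argue exactly as the paper does: $\depth(T_j)\le j$ and $d_{\ge 2}(T_j)\le v(T_{j-1})$ (all star centers through Phase $j$ lie in $T_{j-1}$) fed into Claim~\ref{cl:rounds}, followed by the geometric estimates $v(T_{j-1})\le 2\left(\tfrac{n}{200b}\right)^{j-1}$ and $e(T_j)<v(T_j)\le 2\left(\tfrac{n}{200b}\right)^{j}$ together with $j=o(\ln n)$ and $\tfrac{n}{200b}=\Omega(\ln^2 n)$; this part is fine and matches the paper, up to the harmless imprecision $j\le r$ in place of $j\le \lfloor r\rfloor+1$.

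The defect is in your displayed chain for part 1: the final step $|L_{j-1}|\bigl(\tfrac{n}{200b}+1\bigr)\le \tfrac{n}{200b}|L_{j-1}|$ is false for every value of the parameters -- no largeness of $\tfrac{n}{200b}$ can absorb the additive $+1$. Indeed, under the literal reading ``$L_j$ = all degree-one vertices of $T_j$'' both the upper bound and the exact power formula are off by the $\tfrac{|L_{j-1}|}{2}$ old leaves not chosen as centers, so that reading cannot be salvaged by estimates. The repair you tentatively adopt at the end is the correct one, and it is precisely the reading on which the paper's own proof (and its later identity $v(T_{j-1})=\sum_{i=0}^{j-1}|L_i|$) rests: take $L_j$ to be the leaves created in Phase $j$, i.e.\ the depth-$j$ vertices of $T_j$. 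With that convention the computation is immediate: $|L_j|$ equals the number of stars attached in Phase $j$ times their size, hence $|L_j|\le \tfrac{|L_{j-1}|}{2}\cdot\tfrac{n}{100b}=\tfrac{n}{200b}|L_{j-1}|$; and if $v(T_j)<c_1n$ then no truncation occurred in Phase $j$ (truncation forces $v(T_j)=c_1n$ and the move to Stage II) nor in any earlier phase, so exactly $\tfrac{|L_{j-1}|}{2}$ full stars of size $\tfrac{n}{100b}$ were attached and induction gives $|L_j|=\left(\tfrac{n}{200b}\right)^j$. So: state this convention up front, delete the false inequality chain and the ``forces equality'' detour, and your argument coincides with the paper's proof.
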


\begin{proof}
	If Walker can follow the strategy, then in 
	Phase 1 she creates a star of size $\frac{n}{200b}$, ensuring that $|L_1|=\frac{n}{200b}$.
	Moreover, in Phase~$j$
	Walker attaches at most $\frac{|L_{j-1}|}{2}$ stars of size $\frac{n}{100b}$ to the vertices of $L_{j-1}$, giving
	${|L_j|\leq \frac{n}{200b}|L_{j-1}|}$. If $v(T_j)<c_1n$, she attaches exactly $\frac{|L_{j-1}|}{2}$ stars of size $\frac{n}{100b}$ to the vertices of $L_{j-1}$, giving ${|L_j|= \frac{n}{200b}|L_{j-1}|}$, which proves the first part of the claim.
	The second part of the claim is obvious if $j=1$.	So, let $j>1$. Since $\left(\frac{n}{200b}\right)^{j-1} =|L_{j-1}|<n$ holds when $v(T_{j-1})<c_1n$, we conclude 
	that $j \leq \lfloor r \rfloor+1=o(\ln n)$ for every Phase $j$ in Stage I. Moreover,
	$v(T_{j-1})=\sum_{i=0}^{j-1} |L_i|<2\cdot \left(\frac{n}{200b}\right)^{j-1}$ and $e(T_j)<v(T_j)<2\cdot \left(\frac{n}{200b}\right)^j$. Thus, applying Claim \ref{cl:rounds}, we conclude
	$$R_j\leq 2e(T_j)+2j\cdot v(T_{j-1})\leq 4\cdot\left(\frac{n}{200b}\right)^j+4j\cdot \left(\frac{n}{200b}\right)^{j-1}<5\cdot \left(\frac{n}{200b}\right)^j,$$
	where in the last inequality we used the fact that $\frac{n}{200b}=\Omega(\ln n)$ and $j=o(\ln n)$.
\end{proof}

\begin{claim}\label{BdegreeLeave}
	Let $n$ be large enough.
	Immediately before Walker starts building a star with center vertex $v$ in Stage I,
	we have $d_B(v)\leq 0.2n$.
\end{claim}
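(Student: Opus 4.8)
The plan is an averaging argument over the available star-centers in the current leaf layer, comparing the total Breaker-degree in the board to the number of rounds played so far.

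For Phase~$1$ there is nothing to do: the only center is the root $v_0$ and, no star having been attached yet, $d_B(v_0)\le b\le 0.2n$ for large $n$ (recall $b\le n/\ln^2 n$). So fix a Phase $j\ge 2$ and look at the moment Walker identifies the center $v\in L_{j-1}$ of the star she is about to build. Since she is still in Stage~I she has not yet reached $v(T)\ge c_1n$, hence $v(T_{j-1})<c_1n$ and Claim~\ref{diameter_first-claim_}(1) applies: $|L_{j-1}|=(n/200b)^{j-1}$. In Phase~$j$ Walker makes at most $|L_{j-1}|/2$ star attachments, so at least $|L_{j-1}|/2$ vertices of $L_{j-1}$ are still available as centers at this moment. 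Moreover, by Claim~\ref{diameter_first-claim_}(2) (which, via Claim~\ref{cl:rounds}, bounds the number of rounds at any point of Phase~$j$, not only at its end, since the current tree is always a subtree of $T_j$ and at most $v(T_{j-1})$ walks to a center have been performed) the number of rounds played so far is less than $5(n/200b)^{j}$; thus Breaker has claimed fewer than $5b(n/200b)^{j}$ edges and
$$\sum_{u\in V(K_n)} d_B(u) \;=\; 2e(B)\;<\;10b\left(\frac{n}{200b}\right)^{j}\;=\;\frac{n}{20}\left(\frac{n}{200b}\right)^{j-1}.$$

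If every available center in $L_{j-1}$ had Breaker-degree at least $0.1n$, the left-hand side would be at least $0.1n\cdot\frac{1}{2}(n/200b)^{j-1}=\frac{n}{20}(n/200b)^{j-1}$, a contradiction; hence some available center, and in particular the minimizer $v$ chosen by Walker, has $d_B(v)<0.1n$ at the moment of identification. It remains to account for the walk to $v$: this takes at most $2j$ rounds along $T$, during which $d_B(v)$ grows by at most $2jb$. Since $j\le\lfloor r\rfloor+1=o(\ln n)$ and $b\le n/\ln^2 n$, we get $2jb=o(n)\le 0.1n$ for large $n$, so immediately before Walker starts building the star, $d_B(v)<0.1n+0.1n=0.2n$, as required.

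The only real care needed is in the two bookkeeping points flagged above — that the exact size $|L_{j-1}|=(n/200b)^{j-1}$ is indeed available throughout Stage~I, and that the round count of Claim~\ref{diameter_first-claim_}(2) may be read off at a mid-Phase moment rather than only at the end of a phase — together with keeping the averaging threshold at $0.1n$ (rather than $0.2n$) so that the $o(n)$ drift of $d_B(v)$ incurred while Walker walks to the chosen center still leaves the final bound below $0.2n$. Beyond this I do not anticipate any genuine obstacle.
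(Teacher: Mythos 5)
Your proposal is correct and follows essentially the same route as the paper: an averaging (pigeonhole) argument over the at least $|L_{j-1}|/2$ still-available centers in $L_{j-1}$, using the round bound $R_j<5\left(\frac{n}{200b}\right)^j$ from Claim~\ref{diameter_first-claim_} to cap $2e(B)$, which forces the minimizer to satisfy $d_B(v)<0.1n$ at identification, and then adding the $2jb=o(n)$ increase incurred while walking to $v$. The extra bookkeeping you flag (exactness of $|L_{j-1}|$ while in Stage~I and reading the round count mid-phase) matches what the paper implicitly uses.
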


\begin{proof}
	When Walker starts her first star at $v_0$, there are no Breaker edges at all.
	Therefore, for $v=v_0$ the statement is obvious, and we can consider Phase $j$, for some $j>1$.
	Let $v\in L_{j-1}$ be a vertex at which Walker wants to attach a star,
	according to the proposed strategy. 
	Then, before starting the star, $v$ belongs to a set of at least $\frac{|L_{j-1}|}{2}$ vertices
	of $L_{j-1}$ that still have degree~1 in Walker's graph. Since we played at most $R_j$ rounds so far,
	Breaker has claimed at most $b\cdot R_j$ edges, which implies that the average Breaker-degree
	of all these at least $\frac{|L_{j-1}|}{2}$ vertices, is bounded from above by
	\begin{align*}
		\frac{2b\cdot R_j}{|L_{j-1}|/2} < 20b \cdot \left(\frac{n}{200b}\right)^j\big/\left(\frac{n}{200b}\right)^{j-1}=0.1n,
	\end{align*}
	where the first inequality follows from Claim \ref{diameter_first-claim_}.
	Since Walker, by following the strategy, chooses the vertex $v$ such that its Breaker-degree is minimal,
	we obtain $d_B(v)\leq 0.1n$ at the moment when Walker fixes $v$ for attaching a star.
	She may walk to $v$ within in the following $2j$ rounds, but even afterwards
	$d_B(v)\leq 0.1n+2jb<0.2n$ holds.
\end{proof}

With the previous claims in hand, we can conclude that Walker is able to follow the strategy of Stage I.
Indeed, Walker can easily follow Phase 1.
Afterwards, whenever Walker aims to attach a new star
to some vertex $v$, then the number of vertices $w\in V\setminus V(T)$
with $vw$ being free is at least
$$|V\setminus V(T)|-d_B(v)\geq (1-c_1)n-0.2n>(2b+1)\cdot\frac{n}{100b},$$
where the first inequality holds by Claim \ref{BdegreeLeave} and since $v(T)\leq c_1n$, as long as Walker plays according to Stage I. Thus, there are enough free edges to make sure that Walker can attach a star of size $\frac{n}{100b}$ as required. 

Moreover, when she finishes her tree of size $c_1n$ during Stage $j$, then $\left(\frac{n}{200b}\right)^{j-1}=|L_{j-1}|<n$ holds. Hence, her final tree has depth $j \leq \lfloor r\rfloor+1 = \frac{c_2}{2}$. As Walker makes at most $200b$ star attachments throughout Stage I (as each such attachment adds $\frac{n}{200b}$ vertices to the tree), we further know, by Claim \ref{cl:rounds}, that Stage I lasts at most $2c_1n+200b \cdot 2(\lfloor r\rfloor + 1)\leq \frac{n}{2} + o(n)<n$ rounds.

{\bf Stage II.} To proceed with the discussion of Stage II,
we first prove the following useful claim.

\begin{claim}\label{StageIIdegree}
	Let $n$ be large enough.
	Assume that Walker can follow the strategy of Stage~II.
	Then, as long as $i \leq \frac{c_1n}{2}$,
	we have that  
	$d_B(z_i,V_i)\leq 20c_2b$
	at the moment when Walker starts to attach a star
	at $z_i$.
\end{claim}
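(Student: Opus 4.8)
The plan is to combine a bound on the number of rounds played so far with an averaging argument over the vertices of $T_1$ from which there is still a free edge into $V_i$.

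\textbf{Step 1 (round count).} First I would show that at the moment Walker fixes $z_i$, at most $R<4n$ rounds have been played, so that $e(B)\le bR<4bn$. Stage~I lasts fewer than $n$ rounds, as already established. For Stage~II one may apply Claim~\ref{cl:rounds}: here $e(T)\le v(T_1)=c_1n$, $\depth(T)\le \tfrac{c_2}{2}+1$, and $d_{\geq 2}(T)\le 200b+i$ (at most $200b$ star attachments in Stage~I and at most $i$ so far in Stage~II). Using $i\le \tfrac{c_1n}{2}$ one gets $2\depth(T)\cdot d_{\geq 2}(T)\le (c_2+2)\cdot 200b+(c_2+2)\tfrac{c_1n}{2}$; the first term is $o(n)$ because $b\le n/\ln^2 n$ and $c_2=O(\ln n/\ln\ln n)$, while the second is at most $n$ since $c_1(c_2+2)=1+\tfrac{2}{c_2}\le 2$. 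Hence $R\le 2c_1n+n+o(n)<4n$.

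\textbf{Step 2 (many admissible centers).} Next I would observe that $V(T_1)\cap V_i=\emptyset$, since $V(T_1)$ lies in Walker's tree while $V_i$ is its complement, and that every vertex of $V_i$ has Walker-degree $0$, because a vertex touched by Walker during a star attachment is immediately added to the tree. Therefore all $c_1n\cdot|V_i|$ edges of $K_n$ between $V(T_1)$ and $V_i$ are either Breaker's or free, and at most $e(B)<4bn$ of them are Breaker's. Consequently the number of vertices $z\in V(T_1)$ with $d_F(z,V_i)\ge 1$ is at least $c_1n-\tfrac{4bn}{|V_i|}$. Since Walker is still playing Stage~II we have $|V_i|>50c_2b$, hence $\tfrac{4bn}{|V_i|}<\tfrac{4n}{50c_2}=\tfrac{2c_1n}{25}$, so at least $\tfrac{c_1n}{2}$ such vertices exist (this also settles the existence claim made in the strategy description).

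\textbf{Step 3 (averaging and the final walk).} The sum of $d_B(z,V_i)$ over the at least $\tfrac{c_1n}{2}$ vertices $z$ with $d_F(z,V_i)\ge 1$ is at most $e_B(V(T_1),V_i)\le e(B)<4bn$, so the minimum over this set is at most $\tfrac{4bn}{c_1n/2}=\tfrac{8b}{c_1}=8c_2b$. Since Walker picks $z_i$ to attain this minimum, $d_B(z_i,V_i)\le 8c_2b$ when she fixes $z_i$. Walking to $z_i$ along $T$ takes at most $c_2+2$ rounds, during which Breaker claims at most $(c_2+2)b\le 2c_2b$ further edges; thus $d_B(z_i,V_i)\le 10c_2b\le 20c_2b$ once she starts the star.

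The main obstacle is keeping the round count linear in $n$: the averaging argument only yields a $\Theta(c_2b)$ bound when $R=O(n)$, and this is precisely where the hypothesis $i\le \tfrac{c_1n}{2}$ is needed, together with the estimates $c_1(c_2+2)=O(1)$ and $bc_2=o(n)$.
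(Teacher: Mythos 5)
Your argument is essentially the paper's own proof: bound the number of rounds via Claim~\ref{cl:rounds} so that $e(B)=O(bn)$, average $d_B(\cdot,V_i)$ over the many admissible vertices of $V(T_1)$ (the paper averages over $V(T_1)\setminus\{z_1,\ldots,z_{i-1}\}$, which has size at least $\tfrac{c_1n}{2}$, obtaining $12c_2b$), and then add at most $(c_2+2)b$ for the walk to $z_i$; your variant of restricting the averaging to vertices with $d_F(\cdot,V_i)\ge 1$ additionally settles the existence claim from the strategy description. One small slip in Step~1: you write $e(T)\le v(T_1)=c_1n$, but Walker's tree grows beyond $T_1$ during Stage~II, so only $e(T)\le n$ is available; this is harmless, since it still gives $R\le 2n+n+o(n)<4n$, and your constants $8c_2b$ and $10c_2b\le 20c_2b$ are unaffected.
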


\begin{proof}
	As long as $i \leq \frac{c_1n}{2}$ star attachments happened in Stage II, less than $3n$ rounds were played, which follows by Claim \ref{cl:rounds}, since $e(T)\leq n$ and $\text{depth}(T)\leq \frac{c_2}{2}+1$, and since the number of all previous star attachments is bounded by $200b+i$.
	It follows, when Walker identifies $z_i$, that we have $e(B)\leq 3bn$ and
	$|V(T_1)\setminus \{z_1,\ldots,z_{i-1}\}|\geq \frac{c_1n}{2}.$
	Thus, at this moment,
	the Breaker-degree of $z_i$ is at most
	$$\frac{2e(B)}{|V(T_1)\setminus \{z_1,\ldots,z_{i-1}\}|}\leq 12c_2b,$$
	as Walker chooses $z_i$ with minimal Breaker-degree.
	After Walker identified $z_i$ for her $i^{\text{th}}$
	repetition of Stage II, she needs at most $c_2+2$ rounds to make $z_i$ her new position.
	So, when she starts creating the mentioned star,
	$d_B(z_i,V_i)\leq 12c_2b+(c_2+2)b< 20c_2b$ holds.
\end{proof}

By this claim we know that Walker can follow Stage II
as long as $i\leq \frac{c_1n}{2}$ and $|V_i|> 50c_2b$.
It thus remains to show that $|V_i|\leq 50c_2b$
holds (and Walker proceeds to Stage III) after at most $\frac{c_1n}{2}$ star attachments in Stage II.

For this, consider for $i\leq \frac{c_1n}{2}$ the $i^{\text{th}}$ star attachment in Stage II, where we may assume that $|V_i|>50c_2b$ still holds.
As, during this star attachment,
Walker claims one edge within every two rounds, while Breaker claims $2b$ edges,
Walker creates a star with center $z_i$ of size at least
$\frac{|V_i|-d_B(z_i,V_i)}{2b+1}-1$.
Using Claim \ref{StageIIdegree} this yields
\begin{align*}
	|V_{i+1}| \leq |V_i|- \Big(\frac{|V_i|-d_B(z_i,V_i)}{2b+1}-1 \Big)
	\leq \frac{2b}{2b+1}|V_i|+\frac{20c_2b}{2b+1}+1
	\leq \left(1-\tfrac{1}{4b+2}\right)|V_i|.
\end{align*}
So, we conclude
$50c_2b < |V_{i}|\leq \left(1-\tfrac{1}{4b+2}\right)|V_{i-1}| \leq \ldots \leq \left(1-\tfrac{1}{4b+2}\right)^{i-1}n<e^{-(i-1)/(4b+2)}n.$
But this does not hold for $i>\frac{c_1n}{2}$ as $\frac{c_1n}{b}=\omega(\ln n)$. Thus, 
after at most $\frac{c_1n}{2}$ rounds, Walker needs to reach a point when $|V_i|\leq 50c_2b$.

{\bf Stage III.} When Walker enters Stage III, we have $v(T_2)\geq n - 50c_2b$,
while at most $3n$ rounds were played so far.
Moreover, the following claim shows that Walker can follow her strategy for at least $\frac{n}{4}$ star attachments, as long as $|W_i|\geq 400b$.

\begin{claim}\label{StageIIIidentification}
	Let $n$ be large enough.
	As long as $i \leq \frac{n}{4}$ and $|W_i| \geq 400b$,
	Walker can always identify a vertex $z_i$, as described by her strategy in Stage III. 
\end{claim}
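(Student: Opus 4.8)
The plan is a first--moment argument over $V(T_2)$: for $i\le\tfrac{n}{4}$ I bound the number of vertices of $V(T_2)$ violating each of the conditions (i)--(iii) on $z_i$ and show that these ``bad'' sets cannot cover $V(T_2)$. Throughout I assume that Walker has been able to follow her strategy through the $(i-1)$-st star attachment, so that $z_1,\dots,z_{i-1}$ were legally chosen; in particular $d_B(z_j)\le 100b$ for every $j<i$.

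I first bound the number of rounds played when Walker is about to identify $z_i$. Recall that at most $3n$ rounds were played when Walker entered Stage~III, and note that $|W_1|\le n-v(T_2)\le 50c_2b$. In Stage~III, walking to $z_0$ costs at most $c_2+2$ rounds, while the $j$-th star attachment costs one round for the move along the free edge $z_{j-1}z_j$ plus $2s_j$ rounds to build the star of size $s_j$. Since the stars' leaves are pairwise distinct and lie in $W_1$, we have $\sum_{j<i}s_j\le|W_1|$, so Stage~III has used at most $c_2+2+(i-1)+2|W_1|\le\tfrac{n}{4}+O(c_2b)$ rounds. Because $b\le\tfrac{n}{\ln^2 n}$ and $c_2=2\lfloor r\rfloor+2=O\!\left(\tfrac{\ln n}{\ln\ln n}\right)$ give $c_2b=o(n)$, fewer than $4n$ rounds have been played in total, whence $e(B)\le 4bn$ and $e(W)\le 4n$. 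I also record that each star built in Stage~III is short: $s_j\le\tfrac{|W_j|}{2b+1}\le\tfrac{|W_1|}{2b+1}\le 25c_2$.

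Now I count bad vertices. Since $\sum_v d_B(v)=2e(B)\le 8bn$, fewer than $\tfrac{8bn}{100b}=0.08n$ vertices have $d_B(v)>100b$, which deals with (iii); and since $\sum_v d_W(v)=2e(W)\le 8n$, at most $200$ vertices have $d_W(v)>\tfrac{n}{25}$. A vertex $v$ with $d_B(v)\le 100b$ and $d_W(v)\le\tfrac{n}{25}$ satisfies $d_F(v)=n-1-d_B(v)-d_W(v)>0.9n$ (using $100b\le\tfrac{n}{25}$ for large $n$), hence also (ii). Finally, for (i) I claim that $d_B(z_{i-1})+d_W(z_{i-1})<0.11n$, so at most $0.11n$ vertices are joined to $z_{i-1}$ by a non-free edge. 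For $i\ge 2$ this holds because $z_{i-1}$ was chosen with $d_F(z_{i-1})\ge\tfrac{9n}{10}$, and since then only the move $z_{i-2}z_{i-1}$ and a star of size $s_{i-1}\le 25c_2$ have been added at $z_{i-1}$ by Walker, while Breaker added at most $(1+2s_{i-1})b=o(n)$ edges there; for $i=1$ one first notes that at the start of Stage~III only $O(b)$ vertices of $V(T_2)$ have Breaker- or Walker-degree exceeding $\tfrac{n}{25}$, so a vertex $z_0\in V(T_2)$ with $d_F(z_0)>\tfrac{10n}{11}$ indeed exists, and walking to it over at most $c_2+2$ rounds changes its Breaker- and Walker-degrees by only $o(n)$. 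Adding up, at most $0.08n+200+0.11n<0.2n$ vertices of $V(T_2)$ are excluded, whereas $v(T_2)\ge n-50c_2b\ge 0.99n$; therefore some $z_i$ satisfying (i)--(iii) exists.

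The step I expect to be the crux is the round--and--degree bookkeeping of the second paragraph, in particular the observation that the stars built in Stage~III have size only $O(c_2)$ and not $O(c_2b)$. Using only the crude bound $s_j\le|W_1|\le 50c_2b$, Breaker's potential degree gain at $z_{i-1}$ during a single star attachment could be as large as $\Theta(c_2b^2)$, which need not be $o(n)$, and the argument for condition (i) would break down. The remaining estimates are routine union bounds.
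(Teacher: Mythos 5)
Your proposal is correct and takes essentially the same route as the paper: bound the total number of rounds (hence $e(B)=O(bn)$ and $e(W)=O(n)$), count the vertices of $V(T_2)$ of small free degree or large Breaker-degree, and observe that only few edges at $z_{i-1}$ can have been claimed since its selection. In particular, your key point that a Stage III star costs only about $(2b+1)s_{i-1}\le |W_{i-1}|\le 50c_2b=o(n)$ edges at $z_{i-1}$ is exactly the paper's estimate $d_F(z_{i-1})\ge \tfrac{9n}{10}-|W_{i-1}|>\tfrac{8n}{9}$, so the two arguments differ only in bookkeeping constants and in organizing the count as a union bound rather than via the set $X$.
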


\begin{proof}
	At the beginning of Stage III, less than $3n$ rounds were played.
	The number of vertices $v\in V(K_n)$ such that $d_F(v)<\frac{10n}{11}-1$ is at most $$\frac{2(e(B)+e(W))}{n/11}\leq \frac{2(3bn+3n)}{n/11}=o(n)=o(v(T_2)).$$ Thus, there exists a vertex $z_0\in V(T_2)$ such that $d_F(z_0)\geq \frac{10n}{11}$. Walker then needs at most $c_2+2$ rounds to reach $z_0$ from the position which she has at the end of Stage II. Hence, at the time when Walker visits $z_0$, we must have $d_F(z_0)\geq \frac{10n}{11}-(c_2+2)(b+1)>\frac{9n}{10}$.
	
	Let us consider now the remaining vertices $z_i$ with $i \leq \frac{n}{4}$.
	As long as $i \leq \frac{n}{4}$, we see that at most $3n+(c_2+2)+i+2n <6n$ rounds were played.
	Indeed, until the end of Stage II the game lasts at most $3n$ rounds, we may need $c_2+2$ rounds to reach the vertex $z_0$ at the beginning of Stage III,
	for each new identification of a vertex $z_j$ Walker chooses the edge $z_{j-1}z_j$, and the star attachments last at most $2n$ rounds in total.
	Thus, right before Walker's move from $z_{i-1}$ to $z_i$, we must have $e(B) \leq 6bn$ and $e(W)\leq 6n$. Now, right at this moment consider 
	$$X:=\big\{v\in V(T_2)\setminus \{z_1,\ldots,z_{i-1}\}: \ d_F(v)\geq \frac{9n}{10}\big\}.$$ 
	The number of vertices $v\in V(K_n)$ with $d_F(v)\leq \frac{9n}{10}-1$ is at most $$\frac{2(e(B)+e(W))}{n/10}\leq \frac{2(6bn+6n)}{n/10}=o(n).$$ Thus, $|X|\geq |V(T_2)\setminus \{z_1,\ldots,z_{i-1}\}|-o(n) > \frac{n}{2}$. 
	Moreover, by the choice of $z_{i-1}$ and the size of the star attached to $z_{i-1}$, we obtain
	$$d_F(z_{i-1}) \geq \frac{9n}{10}-(2b+1)\cdot \frac{|W_{i-1}|-d_B(z_{i-1},W_{i-1})}{2b+1} \geq \frac{9n}{10}-|W_{i-1}|> \frac{8n}{9},$$
	as $|W_i|\leq 50c_2b$.
	Hence $d_F(z_{i-1},X) > \frac{n}{3}$.
	Furthermore, since $e(B)<6bn$, at most $\frac{n}{8}$ vertices in $X$ have Breaker-degree larger than $100b$. It follows that there is a vertex $z_i\in X$ such that $z_{i-1}z_i$ is a free edge and $d_B(z_i) \leq 100b$.
\end{proof}

We finally show that Walker can follow Stage III and create a tree as required. As long as $i\leq \frac{n}{4}$ and $|W_i|\geq 400b$ she can identify a vertex $z_i$ as claimed. 
She then obviously can attach a star as described, and while doing so, she ensures that
\begin{align*}
	|W_{i+1}| \leq |W_i|- \Big(\frac{|W_i|-d_B(z_i,W_i)}{2b+1}-1 \Big)
	\leq \frac{2b}{2b+1}|W_i|+51
	\leq \left(1-\frac{1}{4b+2}\right)|W_i|.
\end{align*}
In particular, as long as $|W_i|\geq 400b$, this yields
$$400b\leq |W_{i}|\leq \left(1-\tfrac{1}{4b+2}\right)|W_{i-1}|\leq \ldots \leq \left(1-\tfrac{1}{4b+2}\right)^{i-1} \cdot 50c_2b <e^{-(i-1)/(4b+2)}\cdot 50c_2b.$$
However, as $\frac{n}{b}=\Omega(\ln^2 n)$, this cannot hold for $i > \frac{n}{4}$. That is, after at most $\frac{n}{4}$ star attachments (and therefore at most $6n$ rounds)
Walker must have a tree on $n-400b$ vertices, of depth
at most $\frac{c_2}{2}+2$, and diameter at most
$c_2+4=2\lfloor r \rfloor + 6$.
\hfill $\Box$



\section{Main proofs}\label{WB:cycle}

\subsection{The unbiased game}

In the following we prove {\bf Theorem \ref{Walker_cycle1}}.

We start with {\bf Breaker's part}. Assuming he plays as first player,
his strategy is as follows. At the beginning of the game, he fixes a vertex $w_1$
which is not the start vertex $v_0$ of Walker. As long as Walker has a component of size
less than $n-2$, Breaker's strategy is to claim the edge between $w_1$ and Walker's current position.
In case this edge is not free, he claims another arbitrary edge. Note that this in particular means
that Breaker's first edge is $v_0w_1$, and inductively Walker has no chance to make $w_1$ to
become her next position, as long as her graph is a component of size smaller than $n-2$.

If Walker does not manage to create a component of size $n-2$, then there will not be a cycle
of length $n-2$, and we are done. So, we can assume that there is a point in the game where
Walker's component $K$ reaches size $n-2$, with $v'$ being the last vertex added to it.
Let $w_2$ be the other vertex besides $w_1$ which does not belong to $K$, and note that the only free edges
incident with $w_1$ are $w_1w_2$ and $w_1v'$. From now on, Breaker always
claims the edge between $w_2$ and the current position of Walker, starting with $v'w_2.$
If again this edge is not free, then Breaker claims another arbitrary edge.

Now it is easy to see that Walker will never visit the vertex $w_2$.
In particular, she never claims the edge $w_1w_2$, which guarantees $d_W(w_1)\leq 1$ and $d_W(w_2)= 0$
throughout the game. That is, both vertices $w_1$ and $w_2$ cannot belong to a cycle of Walker,
and thus Breaker prevents cycles of length larger than $n-2$.
 
So, from now on let us focus on {\bf Walker's part}, and let us assume that Breaker is the first player.
We start with the following useful lemma,
which roughly says that in case Walker manages to create a large cycle that touches almost every vertex, while
certain properties on the distribution of Breaker's edges hold, Walker has a strategy to create
an even larger cycle within a small number of rounds. 

\begin{lemma}\label{extend}
Let $n$ be large enough. Assume a Walker-Breaker game is in progress, where
Walker has claimed a cycle $C$ with $n-125\leq v(C)\leq n-3$,
and with her current position being a vertex $x\in V(C)$.
Assume further that Breaker has claimed at most $2n$ edges so far, and that there is at 
most one vertex $y\in V(K_n)\setminus V(C)$ with $d_B(y,V(C))\geq \frac{n}{10}$.
Then Walker has a strategy to create a cycle $C'$ with $V(C)\subsetneqq V(C')$ within less than $25$ further rounds.  
\end{lemma}

\begin{proof}
In the next rounds, Walker traverses $C$ in an arbitrary direction, i.e.~she
repeats edges that she has claimed in earlier rounds, until she reaches a vertex $v\in V(C)$ with $d_B(v)\leq \frac{n}{10}$. 
For large $n$, this will take her at most 21 rounds, as $e(B)\leq 2n+O(1)$. Once she reached such a vertex,
Walker fixes two vertices $v_1,v_2\in V(K_n)\setminus V(C)$
such that $d_B(v_i,V(C))\leq \frac{n}{10}+21$. 
It follows that
$d_B(v_1,V(C))+d_B(v_2,V(C))+d_B(v,V(C))\leq \frac{3n}{10}+42$ and so, by the pigeonhole principle, there exist three
consecutive vertices $w_1,w_2,w_3$ on the cycle $C$
such that $v\neq w_2$
and such that none of the edges between
$\{w_1,w_2,w_3\}$ and $\{v,v_1,v_2\}$ is claimed by Breaker.
Walker's next move then is to proceed from $v$ to $w_2$.
W.l.o.g. we can assume that Breaker in the following move does not claim any of the edges $w_jv_1$ with $j\in [3]$ (as otherwise, we just interchange the vertices $v_1$ and $v_2$).
Walker as next proceeds from $w_2$ to $v_1$, and in the following round she closes a cycle
of length $v(C)+1$ by proceeding to one of the 
vertices in $\{w_1,w_3\}$. 
\end{proof}

With the above lemma in hand, we now can describe a strategy for Walker to create a cycle
of length $n-2$; we show later that she can always follow it, provided $n$ is large enough.
Let $v_0$ be the start vertex of Walker. After Walker's move
in round $t$, let $U_t$
be the set of vertices not touched by Walker so far,
and let $v_t$ denote her current position. We split Walker's strategy
into the following stages.

{\bf Stage I.}
Suppose Walker's $(t+1)^{\text{st}}$ move is in Stage I.
Assume that Walker's graph is a path $P_t=(v_0,v_1,\ldots, v_t)$. 
\begin{itemize}
\item[{\bf Ia.}] If $|U_t|\leq 120$, then Walker proceeds to Stage III.
\item[{\bf Ib.}] If $|U_t|> 120$ and $v_tv_0$ is a free edge, 
then Walker takes this edge, closing a cycle, and sets $v_{t+1}:=v_0$, and $\ell:= t$. She then proceeds to Stage II. 
\item[{\bf Ic.}] If $|U_t|> 120$ and $v_tv_0$ is not free,
then Walker claims an arbitrary free edge $v_tw$ with $w\in U_t$,
and sets $v_{t+1}:=w$. She then repeats Stage I.
\end{itemize}

{\bf Stage II.} Suppose Walker's $(t+1)^{\text{st}}$ move is in Stage II. Assume that Walker's graph is a cycle $C=(v_0,v_1,\ldots, v_{\ell})$
of length $\ell + 1$,
attached to a (maybe empty) path $P_t=(v_{\ell +1},v_{\ell + 2},\ldots, v_t)$
with $v_{\ell + 1}=v_0$, and with $v_t$ being the current position
of Walker. Moreover, let $x$ denote the number of past rounds in which Walker followed Case {\bf II.c.1}.
We set
$$V_t:=\left\{v\in U_t:\ d_B(v,V\setminus U_t)\geq \frac{n}{11}\right\}.$$
Moreover, in order to keep control on the distribution of Breaker's edges
after each move of Walker in Stage II,
we say that Property $P[t+1,x,i]$ is maintained if the following inequalities hold.\\[-0.5cm]
\begin{align*}
\text{Property $P[t+1,x,i]$} :
\begin{cases}
e_B(U_{t+1})\leq 3x+4+i\\
d_B(v_{t+1},U_{t+1})+e_B(U_{t+1})=e_B(U_{t})\leq 3x + 5+i\\
e_B(\{v_1,v_{\ell}\},U_{t+1})\leq 2(3x+5+i).
\end{cases}
\end{align*}

Now, Walker considers the following cases:

\begin{itemize}
\item[{\bf IIa.}] If $|U_t|\leq 120$, then Walker proceeds to Stage IV.
\item[{\bf IIb.}] If $|U_t|>120$, and $V_t=\emptyset$, then Walker claims an edge $v_tw$ with $w\in U_t$, and sets $v_{t+1}:=w$ and 
$U_{t+1}:=U_t\setminus \{w\}$, in such a way that immediately after her move 
Property $P[t+1,x,0]$ holds. (The precise details of how to choose this edge
are given later.) Walker in the next round repeats Stage II.
\item[{\bf IIc.}] If $|U_t|>120$, and 
if $V_t\neq \emptyset$, then Walker considers two subcases:
\begin{itemize} 
\item[{\bf IIc.1.}] If there is a free edge $v_tw$ with $w\in V_t$, then Walker then claims such an edge. 
She sets $v_{t+1}:=w$ and $U_{t+1}:=U_t\setminus \{w\}$ (and thus $w\notin V_{t+1}$), and she increases $x$ by one.  
After her move, $P[t+1,x,-1]$ holds with the new value of $x$. (The precise details are given later.) Then she repeats Stage II.
\item[{\bf IIc.2.}] Otherwise, Walker proceeds to a free edge $v_tw$
with $w\in U_t$ such that $wz$ is free for every $z\in V_t$. She sets $v_{t+1}:=w$ and 
$U_{t+1}:=U_t\setminus \{w\}$. Moreover, she ensures that immediately after her move
Property $P[t+1,x,1]$ holds. (The precise details are given later.) She then repeats Stage II.
\end{itemize}

\end{itemize}

{\bf Stage III.}
Suppose Walker's $(t+1)^{\text{st}}$ move is in Stage III, and let Walker's graph be a path $(v_0,v_1,\ldots, v_t)$.
Since $|U_t|\leq 120$, we have $t\geq n-121$.
Walker then claims an arbitrary free edge $v_tv_i$
with $0\leq i\leq 4$, thus creating a cycle of length at least $n-125$. Then she proceeds
to Stage V.

{\bf Stage IV.} Let $U$ be the set of untouched vertices with $|U|\leq 120$, when Walker enters Stage~IV.
Within two rounds Walker creates a cycle of length at least $n-120$, which covers every vertex that was visited by Walker so far.
Then she proceeds to Stage V.

{\bf Stage V.} When Walker enters Stage V her graph contains a cycle
of length at least $n-125$. She finally creates a cycle of length $n-2$
by repeatedly applying the strategy given by Lemma~\ref{extend}.

It is obvious that if Walker can follow the proposed strategy, then
she will create a cycle of length $n-2$. It thus remains to convince ourselves that, for large enough $n$,
she  can indeed do so. We consider
all stages and substages separately.

{\bf Stage I.} Before discussing Stage I, let us observe the following.
\begin{observation}\label{obs:StageI}
Assume that Walker did not leave Stage I before the $(t+1)^{\text{st}}$ round. 
Then immediately after her $t^{\text{th}}$ move
her graph is a path $P_t=(v_0,v_1,\ldots, v_t)$  such that all but at most 2 Breaker
edges belong to $E(v_0,V(P_t)).$
\end{observation}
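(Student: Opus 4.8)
The plan is to unwind what the hypothesis says about Walker's first $t$ moves and then simply count Breaker's edges. First I would argue that, under the assumption that Walker did not leave Stage~I before the $(t+1)^{\text{st}}$ round, each of her moves $1,2,\dots,t$ was an application of Case~\textbf{Ic}. Indeed, Case~\textbf{Ia} sends Walker to Stage~III and Case~\textbf{Ib} sends her to Stage~II, so neither could have occurred while she stayed in Stage~I; moreover in Case~\textbf{Ib} her graph would contain a cycle, whereas we want to conclude it is a path. (The very first move is the degenerate initial move in which Walker claims an arbitrary free edge $v_0v_1$, so it does not need to be fitted into Cases~\textbf{Ia}--\textbf{Ic}.) In particular this already gives that immediately after move~$t$ Walker's graph is the path $P_t=(v_0,v_1,\dots,v_t)$.

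Next, for each index $j$ with $2\le j\le t$, I would examine the situation immediately before Walker's $j$-th move: at that moment her graph is the path $P_{j-1}=(v_0,\dots,v_{j-1})$, and since that move followed Case~\textbf{Ic} and not Case~\textbf{Ib}, the edge $v_0v_{j-1}$ was not free. As $v_0\neq v_{j-1}$ and $v_0v_{j-1}\notin E(P_{j-1})$, this edge had not been claimed by Walker, hence it had already been claimed by Breaker (within his first $j\le t$ moves). Consequently, immediately after Walker's $t$-th move, Breaker owns all $t-1$ of the pairwise distinct edges $v_0v_1,v_0v_2,\dots,v_0v_{t-1}$, each of which lies in $E(v_0,V(P_t))$.

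Finally I would compare this with the total number of edges Breaker has claimed. Since the game is unbiased and Breaker moves first, immediately after Walker's $t$-th move Breaker has made exactly $t$ moves, i.e.\ owns exactly $t$ edges. At least $t-1$ of them are among $v_0v_1,\dots,v_0v_{t-1}\subseteq E(v_0,V(P_t))$, so at most one of Breaker's edges fails to lie in $E(v_0,V(P_t))$; in particular all but at most $2$ of Breaker's edges belong to $E(v_0,V(P_t))$, which is the claim (in fact the argument gives the sharper bound $1$).

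I do not expect any real obstacle here; the only point requiring a little care is the bookkeeping of which move happens when — making sure that the non-freeness of $v_0v_{j-1}$ which triggers Case~\textbf{Ic} at move~$j$ is witnessed by a Breaker edge already present after at most $j\le t$ Breaker moves, and that the $t-1$ edges $v_0v_1,\dots,v_0v_{t-1}$ are genuinely distinct and all incident to $v_0$. Once that is pinned down, the count of Breaker's edges yields the bound immediately.
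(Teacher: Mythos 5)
Your overall approach is the same as the paper's: observe that every move in Stage I was a Case Ic move, so the graph is a path, and then use the non-freeness condition that triggers Case Ic at each step to force Breaker edges at $v_0$, finishing with a count of Breaker's $t$ edges. However, there is a concrete error in the case $j=2$ of your second paragraph. Immediately before Walker's second move her graph is $P_1=(v_0,v_1)$, so the edge $v_0v_{j-1}=v_0v_1$ \emph{does} belong to $E(P_{j-1})$ — it is Walker's own first edge. The edge $v_0v_1$ is non-free because Walker holds it, not because Breaker does, so your assertion that Breaker owns all $t-1$ edges $v_0v_1,v_0v_2,\dots,v_0v_{t-1}$ is false, and your claimed ``sharper bound'' that at most one Breaker edge lies outside $E(v_0,V(P_t))$ is also false in general: Breaker may spend his first two moves (he moves first) on edges disjoint from $v_0$, since Case Ic at move $2$ is triggered automatically by Walker's edge $v_0v_1$.

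The repair is immediate and brings you exactly to the paper's argument: the forcing works only for $3\le j\le t$, giving the $t-2$ Breaker edges $v_0v_2,\dots,v_0v_{t-1}$ (the paper phrases this as $v_0v_i$ for $i\in\{2,\dots,t-1\}$). Since Breaker has exactly $t$ edges after Walker's $t$-th move, all but at most $2$ of them lie in $E(v_0,V(P_t))$, which is precisely the stated bound. So the proof is salvageable with a one-line correction, but as written the key intermediate claim and the strengthened conclusion are wrong.
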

\begin{proof}
If Walker does not leave Stage I, then she always plays according to Case Ic.
Since in this stage she always proceeds from her current vertex to an untouched vertex,
it is obvious that her graph is a path. Moreover,
after round $t$, Breaker has claimed $t$ edges in total. Since Walker never followed
Case Ib in an earlier round, Breaker must have claimed $v_0v_i\in E(v_0,V(P_t))$ for every $i\in\{2,\ldots,t-1\}$.
\end{proof}

It thus follows that, whenever Walker plays according to Stage I, 
her graph is a path, i.e.~the assumption of Stage I is satisfied. 
There is nothing to prove if Walker considers Case Ia or Case Ib. Moreover, she can follow Case Ic easily,
since, by Observation \ref{obs:StageI} and by the assumption of Case Ic, before her $(t+1)^{\text{st}}$ move, 
we have $e_B(v_t,U_t)\leq 3<120\leq |U_t|$.

{\bf Stage II.} When Walker plays according to Stage II, her previous move
must have been in Stage I or Stage II. Since she only enters Stage II after closing a cycle in Case Ib, and since in Stage~II she always proceeds to a vertex from the set of untouched vertices, starting from $v_0$, it is obvious that her graph has the shape as described at the beginning of
the strategy description for Stage II.

Moreover, after a move of Walker in Stage II in round $t+1$, we have $U_t=U_{t+1}\dot\cup \{v_{t+1}\}$
and thus $d_B(v_{t+1},U_{t+1})+e_B(U_{t+1})=e_B(U_{t})$ is guaranteed immediately.

To show that Walker can always follow Stage II, one may proceed by induction on the number of rounds in that stage.

Assume first that the $(t+1)^{\text{st}}$ round is the first round in Stage II.
Then, Walker played according to Case Ib in round $t$ and in all the rounds before, she followed Case~Ic. 
In particular, $x=0$. 
Immediately before Walker's $(t+1)^{\text{st}}$ move, by Observation~\ref{obs:StageI}, 
Breaker has less than 4 edges that do not belong to $E(v_0,V(C))$, where $C$ is Walker's cycle at the end of Stage I.
In particular $V_t=\emptyset$. 
We have $|U_t|>120$, as Walker entered Stage II after Case Ib, and therefore, Walker wants to follow Case IIb. 
By our observation on the distribution of Breaker's edges,
Walker can do so, as she can easily find a vertex $w\in U_t$ such that $v_tw$ is free.
Moreover $P[t+1,x,0]$ holds then with $v_{t+1}:=w$ and $U_{t+1}=U_t\setminus\{w\}$,
as $e_B(U_{t+1})\leq e_B(U_t)\leq 4$ and $e_B(\{v_1,v_{\ell}\},U_{t+1})\leq 4$.

Assume then that the $(t+1)^{\text{st}}$ round happens in Stage II, but after the first round of Stage II,
and assume that so far Walker could follow the strategy. To show that Walker can still follow the strategy,
we discuss the different cases separately. If Walker follows Case IIa, then there is nothing to prove. 
Before discussing the other parts of Stage II, we observe the following upper bound on $x$ and the size of $V_t$.

\begin{observation}
Assume Walker plays according to Stage~II for her $(t+1)^{\text{st}}$
move, after she followed the strategy for the first $t$ rounds. Then $\max\{x,|V_t|\}\le 11$.
\end{observation}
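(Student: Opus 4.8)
The plan is to derive both bounds, $x\le 11$ and $|V_t|\le 11$, from a single fact: Breaker has claimed far too few edges so far to afford $\frac{n}{11}$ edges incident to each of twelve ``expensive'' vertices on pairwise disjoint edge sets. So I would first record a bound on $e(B)$. Throughout Stages~I and~II every move of Walker touches a new vertex, with the single exception of the cycle-closing move in Case~Ib; hence after $t$ rounds Walker has touched exactly $t$ vertices, so $t\le n$, and since Breaker moves first this gives $e(B)\le t+1\le n+1$ just before Walker's $(t+1)^{\text{st}}$ move. (When that move is in Case~IIb, IIc.1 or IIc.2 we even have $|U_t|>120$, hence $t\le n-121$ and $e(B)\le n-120$, but the weaker bound already suffices for large $n$.)

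For $|V_t|\le 11$: by definition every $v\in V_t\subseteq U_t$ has $d_B\big(v,V(K_n)\setminus U_t\big)\ge \frac{n}{11}$, and for distinct $v,v'\in V_t$ the Breaker edges from $v$ into the touched set $V(K_n)\setminus U_t$ are disjoint from those from $v'$, since their endpoints in $U_t$ differ. Summing over $V_t$ yields $|V_t|\cdot\frac{n}{11}\le e(B)\le n+1$, so $|V_t|\le 11$ once $n$ is large.

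For $x\le 11$ — the only step that needs care — I would argue as follows. Let $r_1<\dots<r_x$ list the rounds in which Walker applied Case~IIc.1, and let $w_j$ be the vertex reached in round $r_j$; then $w_j\in V_{r_j-1}$, so Breaker owns at least $\frac{n}{11}$ edges joining $w_j$ to the set $S_j$ of vertices touched by the end of round $r_j-1$. The claim is that these $x$ bundles of edges are pairwise disjoint. Suppose an edge $e$ lies in the bundles of both $w_j$ and $w_{j'}$ with $j<j'$; since distinct Stage~II moves reach distinct vertices we have $w_j\ne w_{j'}$, so $e=\{w_j,w_{j'}\}$ and, being counted in $w_j$'s bundle, its endpoint $w_{j'}$ must lie in $S_j$, i.e.\ be touched by the end of round $r_j-1<r_{j'}$ — impossible, as $w_{j'}$ is first touched in round $r_{j'}$. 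Hence $x\cdot\frac{n}{11}\le e(B)\le n+1$, giving $x\le 11$.

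The main obstacle is precisely this disjointness argument for $x$, which hinges on carefully matching the time index in ``touched by the end of round $r_j-1$'' with the convention $V_s=\{v\in U_s:\ d_B(v,V(K_n)\setminus U_s)\ge\frac{n}{11}\}$ that governs the move Walker plays in round $s+1$; everything else is a routine double count.
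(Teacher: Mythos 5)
Your proof is correct and follows essentially the same route as the paper: both bound $x$ and $|V_t|$ by observing that twelve vertices of Breaker-degree at least $\frac{n}{11}$ towards the touched set would force $e(B)>n$, while only about $n$ rounds (hence at most $n+1$ Breaker edges) can have been played before Walker's graph stops being a path plus one cycle. Your write-up merely makes explicit the disjointness of the edge bundles and the round count that the paper's two-line argument leaves implicit.
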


\begin{proof}
The value of $x$ increases by one each time when Walker follows Stage IIc.1, where
she enlarges her graph by a vertex of Breaker-degree at least $\frac{n}{11}$.
If we had $x\geq 12$, then Breaker would have more than $n$ edges claimed already,
likewise if $|V_t|\geq 12$.
However, since Walker's graph contains only one cycle, we have played at most $n$ rounds, a contradiction.
\end{proof}

{\bf Case IIb.} Now, let us focus on Case IIb first and assume that so far, before this
$(t+1)^{\text{st}}$ move,
Walker could always follow the proposed strategy.
Then in round $t$, Walker played according to IIb or IIc.

Assume first that Walker played according to Case IIb in round $t$.
So, we know that before Breaker's $(t+1)^{\text{st}}$ move, Property $P[t,x,0]$ was true, where $x\leq 11$. 

If Breaker in his last move did not make any of the inequalities of Property $P[t,x,0]$  invalid,
then Walker takes $w\in U_t$ arbitrarily with $v_tw$ being free.
This is possible, as \linebreak $d_B(v_t,U_t)\leq 3x+5\leq 38<|U_t|$, and it also guarantees Property $P[t+1,x,0]$
immediately after Walker's move.
Otherwise, Breaker makes at least one inequality of Property $P[t,x,0]$ invalid. There are three cases to consider,
which we discuss in the following.

{\bf Case 1.}
If Breaker with his $(t+1)^{\text{st}}$ move achieved that $e_B(U_t)=3x+5$, then in this move he claimed an edge in $U_t$.
So, we then obtain that
$d_B(v_{t},U_{t})+e_B(U_{t})\leq 3x + 6$ and thus $d_B(v_{t},U_{t})\leq 1$,
and that $e_B(\{v_1,v_{\ell}\},U_{t})\leq 2(3x+5).$
Now, Walker finds a vertex $w\in U_t$ with $v_tw$ being free such that
$d_B(w,U_t)\geq 1$. Walker claims such an edge, setting $v_{t+1}:=w$,
and then $P[t+1,x,0]$ holds, since
$e_B(U_{t+1})=e_B(U_t)-d_B(w,U_t)\leq 3x+4$, \linebreak
$d_B(v_{t+1},U_{t+1})+e_B(U_{t+1})\leq 3x + 5$,
and $e_B(\{v_1,v_{\ell}\},U_{t+1})\leq e_B(\{v_1,v_{\ell}\},U_{t})\leq 2(3x+5)$.

{\bf Case 2.}
If after Breaker's $(t+1)^{\text{st}}$ move $e_B(U_t)\leq 3x+4$ still holds,
but we have  \linebreak $d_B(v_{t},U_{t})+e_B(U_{t})= 3x + 6$, then we know that
Breaker claimed an edge in $U_t\cup \{v_t\}$. Moreover, \linebreak
$d_B(v_{t},U_{t})\leq 3x+6\leq 39<|U_t|$ and $e_B(\{v_1,v_{\ell}\},U_{t})\leq 2(3x+5).$ 
Walker then takes $w\in U_t$ arbitrarily with $v_tw$ being free. 
After Walker's move we then obtain  Property \linebreak {$P[t+1,x,0]$}, since then
$d_B(v_{t+1},U_{t+1})+e_B(U_{t+1})=e_B(U_{t})\leq 3x + 4$,
and we also have
$e_B(\{v_1,v_{\ell}\},U_{t+1})\leq e_B(\{v_1,v_{\ell}\},U_{t})\leq 2(3x+5)$.

{\bf Case 3.}
If the first two inequalities of Property $P[t,x,0]$ 
still hold after Breaker's $(t+1)^{\text{st}}$ move,
but $e_B(\{v_1,v_{\ell}\},U_{t})= 2(3x+5)+1$,
then Breaker in his move claimed an edge between $\{v_1,v_{\ell}\}$
and $U_t$.
Then there are at least
$3x+6$ vertices $w\in U_t$ with $d_B(w,\{v_1,v_{\ell}\})\geq 1$,
and for at least one such vertex $w$ Walker can claim the free edge $v_tw$,
as $d_B(v_t,U_t)\leq 3x+5$. As before, Property $P[t+1,x,0]$ is guaranteed to hold,
as then we obtain
$e_B(U_{t+1})\leq e_B(U_t)\leq 3x+4$, and
$e_B(\{v_1,v_{\ell}\},U_{t+1})=e_B(\{v_1,v_{\ell}\},U_{t})-d_B(w,\{v_1,v_{\ell}\})\leq 2(3x+5)$.

Finally, assume that Walker played according to Case IIc in the $t^{\text{th}}$ round,
and thus, before her $t^{\text{th}}$ move, we had $V_{t-1}\neq \emptyset$.
Then Walker played according to IIc.1 in round $t$, since otherwise we had $V_t\supseteq V_{t-1}\neq\emptyset$,
in contradiction to considering Case IIb for round $t+1$. In particular, the value of $x$ was increased in the $t^{\text{th}}$
round, so that $x\geq 1$, and immediately after Walker's $t^{\text{th}}$ move, we had Property $P[t,x,-1]$.
Independently of Breaker's $(t+1)^{\text{st}}$ move, Walker just chooses some vertex $w\in U_t$ with $v_tw$ being free and claims the edge $v_tw$,
which she can do since \linebreak  $d_B(v_t,U_t)\leq (3x+4)+1<|U_t|$. After proceeding as proposed, $P[t+1,x,0]$ then holds, as
$e_B(U_{t+1})\leq e_B(U_t)\leq (3x+3)+1=3x+4$, and
$e_B(\{v_1,v_{\ell}\},U_{t+1})\leq 2(3x+4)+1<2(3x+5).$

{\bf Case IIc.} Now, let us focus on Case IIc, and assume first that in round $t+1$
Walker wants to play according to Case IIc.1. He claims a free edge $v_tw$ with $w \in V_t$. By the assumption of Case IIc.1, Walker can claim the edge $v_tw$ easily. Now, let $x$ be given after the update of Stage II.c.1 in round $t+1$. Then, after Walker's move in round $t$ we had Property $P[t,x-1,1]$, independent of whether round $t$ was played in Case IIb, IIc.1 or IIc.2. Thus, no matter how Breaker chooses his $(t+1)^{\text{st}}$ edge, and no matter how 
Walker chooses $w$ above,
Property $P[t+1,x,-1]$ is maintained immediately after Walker sets $v_{t+1}=w$, $U_{t+1}=U_t\setminus\{w\}$. 
Indeed, we obtain
$e_B(U_{t+1})\leq e_B(U_t)\leq (3(x-1)+5)+1=3x+3$, and
$e_B(\{v_1,v_{\ell}\},U_{t+1})\leq e_B(\{v_1,v_{\ell}\},U_{t})\leq 2(3(x-1)+6)+1<2(3x+4).$

So, it remains to consider the case when Walker plays according to Case IIc.2 in round $t+1$. Then, after Walker's move in round $t$ 
we had Property $P[t,x,i]$, with $i\in \{-1,0,1\}$
depending on whether round $t$ was played in Case IIc.1, IIb or IIc.2, respectively. 
In any case this gives Property $P[t,x,1]$.
Using $\max\{x, |V_t|\}\leq 11$, we know that after Breaker's $(t+1)^{\text{st}}$ move we
have
\begin{align*}
\sum_{v\in V_t} d_B(v,U_t)+d_B(v_t,U_t) & \leq 2e_B(U_t)+d_B(v_t,U_t) \\ & \leq 
2(e_B(U_t)+d_B(v_t,U_t))\leq 2((3x+6)+1)\leq 80 <|U_t\setminus V_t|.
\end{align*}
That is, Walker can choose a vertex $w$ as described in the strategy.
If Walker followed Case IIb or IIc.1 in round $t$, in which case we
even had Property $P[t,x,0]$ immediately after Walker's $t^{\text{th}}$ move,
it can be easily seen that immediately after her $(t+1)^{\text{st}}$ move, we obtain
$P[t+1,x,1]$. So, assume Walker followed Case IIc.2 in round~$t$.
Then in round $t$ Walker chose $v_t\in U_t\setminus V_t$ in such a way that $v_tz$
was free for every $z\in V_{t-1}\subseteq V_t$. However,
immediately before her move in round $t+1$ no such edge was free anymore,
since Walker again follows Case IIc.2. That is, in the current round we must have $|V_t|=1$ while Breaker in his last move claimed
the unique edge $v_tz$ with $V_t=\{z\}$.
It follows that immediately after Walker's move in round $t+1$ we have
$e_B(U_{t+1})\leq e_B(U_t)\leq 3x+5$ and 
$e_B(\{v_1,v_{\ell}\},U_{t+1})\leq 2(3x+6),$
which implies Property $P[t+1,x,1]$.

{\bf Stage III.} When Walker enters Stage III in round $t+1$, it must be that Case Ia happened.
In particular, in all rounds before entering Stage III she played according to Case Ic,
and thus, when she enters Stage III, her graph is a path
$P_t=(v_0,\ldots,v_t)$ with $n-v(P_t)=|U_t|\leq 120$,
while Breaker has claimed all the edges $v_0v_i$ with $2\leq i\leq t-1$. In particular, there has to be a free edge
$v_tv_j$ with $0\leq j\leq 4$,
and Walker thus can follow the strategy and close a large cycle,
which does not contain at most $125$ vertices.

{\bf Stage IV.} Suppose that Walker enters Stage IV in round $t+1$.
Then her graph is a cycle $C=(v_0,v_1,\ldots, v_{\ell})$
attached to a path $P_t=(v_{\ell +1},v_{\ell + 2},\ldots, v_t)$
with $v_{\ell + 1}=v_0$, and with $v_t$ being her current position.
As she played according to Stage II in the $t^{\text{th}}$
round, we know that immediately after her previous move
Property $P[t,x,1]$ was true.
In her first move in Stage~IV, Walker proceeds to a vertex $w\in U_t$
such that $wv_1$, $wv_{\ell}$ and $wv_t$ are free,
which is possible as after Breaker's $(t+1)^{\text{st}}$ move we have
$\sum_{j\in\{1,\ell,t\}}d_B(v_j,U_t)\leq 3(3x+6)+1\leq 118<|U_t|.$ 
In her second move, she then either claims $wv_1$
or $wv_{\ell}$, thus creating a cycle on $V\setminus U_{t+1}$.

{\bf Stage V.} When Walker enters Stage V, her graph contains
a cycle $C_0$ of length at least $n-125$, while less than $n$ rounds were played so far.
We further observe in the following that outside the cycle there can be at most one vertex
which has a large Breaker-degree towards the cycle. 

\begin{observation}\label{outside}
When Walker enters Stage $V$ there is at most one vertex $w\in V\setminus V(C_0)$
such that $d_B(w,V(C_0))\geq \frac{n}{11}+50$.
\end{observation}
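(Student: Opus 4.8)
I would prove Observation~\ref{outside} by splitting according to how Walker reaches Stage V: either directly from Stage III, or from Stage IV.

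Suppose first Walker enters Stage V from Stage III. Then in every earlier round she played Case Ic, so Observation~\ref{obs:StageI} applies: at the moment she enters Stage III all but at most two of Breaker's edges lie in $E(v_0,V(P_t))$, and after her single Stage III move and Breaker's reply there are at most three such exceptional edges. Hence every vertex $w\neq v_0$ satisfies $d_B(w)\le 4<\tfrac n{11}+50$, so $v_0$ is the only vertex that can violate the bound, and the claim follows.

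Now suppose Walker enters Stage V from Stage IV, her last Stage II move being in some round $t$ with $|U_t|=120$. Let $w^\ast$ be the vertex she moves to in her first Stage IV step; then the cycle $C_0$ she holds on entering Stage V has $V(C_0)=(V\setminus U_t)\cup\{w^\ast\}$, so $V\setminus V(C_0)=U_t\setminus\{w^\ast\}$. Since Breaker claims at most three edges between Walker's round-$t$ move and the beginning of Stage V, every $w\in U_t\setminus\{w^\ast\}$ satisfies $d_B(w,V(C_0))\le d_B(w,V\setminus U_t)+4$, with the degree on the right measured at the end of round $t$. Consequently, a vertex $w$ violating the bound must have $d_B(w,V\setminus U_t)\ge\tfrac n{11}$ at the end of Stage II, i.e.\ $w$ lies in the set $V_t$ of dangerous untouched vertices tracked during Stage II. Thus it suffices to control $|V_t|$ at the instant Walker enters Stage IV.

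For this I would exploit the two mechanisms built into Stage II. First, whenever $V_t\neq\emptyset$ Walker uses Case IIc: in IIc.1 she absorbs a member of $V_t$ outright, and in IIc.2 she first moves to a vertex non-adjacent in Breaker's graph to all of $V_t$, so that she can absorb in the following round; she therefore drains $V_t$ at a rate of roughly one vertex per two rounds. Second, Property $P[\cdot]$ enforces $e_B(U_s)=O(1)$ throughout, which stops Breaker from planting many edges inside the shrinking untouched set and hence throttles how fast he can promote fresh vertices into $V_t$; this is reinforced by the global estimate that each vertex currently in $V_t$, and each vertex Walker has already absorbed in Case IIc.1, carries at least $\tfrac n{11}$ Breaker edges running to pairwise disjoint target sets, so that $e_B\ge(|V_t|+x)\tfrac n{11}$ and in particular $|V_t|\le 11$. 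The heart of the matter — and the step I expect to be the main obstacle — is to upgrade these observations to the conclusion that, by the time $|U_t|$ has fallen to $120$, at most one dangerous vertex survives: one has to argue that Walker had the whole length of Stage II to drain $V_t$ while Breaker's replenishment was blocked by the $e_B(U_s)=O(1)$ invariant, so no backlog of size $\ge 2$ can persist to the end. Once this is established, if a single dangerous vertex does survive it is the unique possible violator (and, if one prefers, Walker's first Stage IV move may be chosen to pick it up into $C_0$), completing the proof.
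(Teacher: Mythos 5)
Your first case (entering Stage V via Stage III) and the reduction in the second case are fine and match the paper: the $+50$ slack, together with the fact that $d_B(w,V\setminus U_t)$ grows by at most $2$ per round, indeed shows that any violating vertex outside $C_0$ must have belonged to $V_t$ during the final rounds of Stage II, and the bound $|V_t|\le 11$ (via $e(B)\le n$) is the right global constraint. However, at the decisive point you stop: you explicitly defer "the heart of the matter", namely why two such vertices cannot both survive outside the cycle, and the heuristic you offer in its place does not work. You argue that Breaker cannot replenish $V_t$ because Property $P[\cdot]$ keeps $e_B(U_s)=O(1)$; but promotion of a vertex $w\in U_t$ into $V_t$ is driven by edges from $w$ to the \emph{touched} set $V\setminus U_t$, and these edges are not counted by $e_B(U_s)$ at all, so that invariant places no restriction on replenishment. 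Moreover, the statement you aim for ("at most one dangerous vertex survives in $V_t$ at the end of Stage II") is not what is needed and is not what the paper establishes: $V_t$ may well contain freshly promoted vertices at that moment; such vertices simply have degree below $\frac{n}{11}+50$ and are harmless for the Observation.

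The missing argument is a persistence-and-absorption contradiction tied to the $+50$ window. Suppose $w_1,w_2\notin V(C_0)$ both have $d_B(w_i,V(C_0))\ge \frac{n}{11}+50$ at the start of Stage V. Since the relevant degree grows by at most $2$ per round, both $w_1$ and $w_2$ lay in $V_t$ throughout the last $20$ rounds of Stage II, so Walker played Case IIc in each of those rounds. Every move of Case IIc.1 permanently removes one vertex of $V_t$ from the untouched set, and after a move of Case IIc.2 Walker sits at a vertex $w$ with $ww_1$ and $ww_2$ free; Breaker can block at most one of these two edges in his single reply, so the very next round is a IIc.1 move. Hence at least one vertex of $V_t$ is absorbed every two rounds during this window, i.e.\ at least $10$ absorptions, while at most $11-2=9$ of them can concern vertices other than $w_1,w_2$. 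So Walker absorbs $w_1$ or $w_2$ into her path, and that vertex lies on the cycle built in Stage IV, contradicting $w_i\notin V(C_0)$. This counting step, which uses the exact design of Case IIc.2 rather than the $e_B(U_s)$ invariant, is what your proposal is missing; also note that your aside about letting Walker's first Stage IV move "pick up" a surviving dangerous vertex would amount to changing the prescribed strategy and cannot be used to verify the Observation as stated.
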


\begin{proof}
There are two possible ways that Walker enters Stage V.

The first way is that she played according to Stage III before, which she entered
because of Case Ia. That is, Walker created a path until $n-120$ vertices were touched,
while in the meantime Breaker always blocked cycles by claiming edges that are incident with $v_0$.
It follows then that $v_0$ is the only vertex
which can have a Breaker-degree of at least $\frac{n}{11}+50$.

The second way to enter Stage V is to play according to Case IIa, when the number of untouched vertices
drops down to 120, and then to reach Stage V through Stage IV.
Assume in this case that there were two vertices $w_1,w_2\in V\setminus V(C_0)$
such that $d_B(w_i,V(C_0))\geq \frac{n}{11}+50$ for both $i\in [2]$,
when Walker enters Stage V. 
It follows then that in the last 20 rounds $t$ before entering Stage IV
both vertices were elements of the corresponding set $V_t$,
as the degree $d_B(w_i,V\setminus U_t)$  can be increased by at most by 2 in each round.
(Breaker may increase this value by one by claiming an edge incident with $w_i$,
and Walker may increase this value by decreasing the set $U_t$ of untouched vertices.)
That is, Walker always would have played according to Case IIc in all these rounds. When she played according to Case IIc.1, she walked to a vertex belonging to $V_t$, which then in Stage IV became part of her cycle.
Otherwise, when Walker played according to IIc.2, then she proceeded to a vertex $w$ such that $ww_1$ and $ww_2$ were free.
Since not both of these edges could be claimed by Breaker in the following round, Walker
followed Case IIc.1 afterwards; but as she did not proceed to $w_1$ or $w_2$,
there must have been another vertex in the current set $V_t$ considered for enlarging her path.
However, as at most $11$ vertices may reach a Breaker-degree of at least $\frac{n}{11}$, it can happen at most 9 times that Walker
chooses a vertex from $V_t$ different from $w_1$ and $w_2$ for enlarging the path.
Thus, there must have been
a round in Case IIc.1 where Walker would have chosen $w_i$ to be her next position, for some $i\in [2]$. In Stage IV this vertex $w_i$ would have become a part of Walker's cycle,
in contradiction to the assumption that $w_i\notin V(C_0)$.
\end{proof}

Now, with this observation in hand, the proof is clear.
As long as Walker does not have a cycle of length $n-2$, Walker creates
larger cycles $(C_i)_{i\ge 1}$ with $V(C_i)\subsetneqq V(C_{i+1})$, by applying Lemma \ref{extend} iteratively. As $v(C_0)\geq n-125$ at the beginning of Stage V,
and as by Lemma~\ref{extend} it takes at most 25 rounds to maintain a larger cycle,
Stage V will last less than 4000 rounds, until either Walker reaches a cycle of length $n-2$,
or Walker cannot follow her strategy anymore.
It follows, by Observation \ref{outside} and since $n$ is large,
that throughout Stage~V, 
there is always at most one vertex outside Walker's current cycle $C_i$ with Breaker-degree at least $\frac{n}{10}$
towards this cycle. Moreover, as we played at most $n$ rounds before entering Stage~IV, we also have $e(B)\leq 2n$
throughout Stage~V, for large $n$. 
Thus, throughout Stage~V the conditions of Lemma \ref{extend} are always fulfilled, and therefore
Walker can follow the proposed strategy until she reaches a cycle $C_i$ with $v(C_i)\geq n-2$. \hfill $\Box$

\subsection{The biased game}

In the following we give a proof for {\bf Theorem \ref{Walker_cycle2}}.

First of all, observe that Breaker can prevent any cycle of length larger than $n-b$. Indeed, assume that Walker starts the game,
then immediately after her first move, Breaker fixes $b$ untouched vertices $u_1,\ldots,u_b$. From that point on, he always claims
the edges between $u_1,\ldots,u_b$ and Walker's current position (in case they are still free). It follows then that Walker never visits the vertices $u_1,\ldots,u_b$ and thus,
she cannot create a cycle of length larger than $n-b$. 

Now, for Walker's part, let $0<\varepsilon<10^{-5}$. Let $b\leq \frac{n}{\ln^2 n}$, $p=\frac{\ln n\cdot \ln\ln\ln n}{n}$ and $d=2\lfloor r\rfloor + 6$,
where $r=\frac{\ln n}{\ln(\frac{n}{200b})}$. Observe that $r=O(\frac{\ln n}{\ln\ln n}).$
In the following we explain how Walker can guarantee creating a cycle of length $n-O(b)$. Walker first builds a graph $G'=(V',E')$ on $n-400b$ vertices of diameter at most $d$ within at most $6n$ rounds. She can do this according to Proposition \ref{diameter}. Immediately afterwards, look at the induced graph 
$(W\cup B)[V']$.
Since the number of edges in this graph is at most $(b+1)\cdot 6n \leq 12bn$, $V'$ contains a subset $V^*$ of size $N:=n-(400+\frac{60}{\varepsilon}) b=(1-o(1))n$ such that the induced graph 
$(W\cup B)[V^*]$ has maximum degree less than $\frac{\varepsilon n}{2}$.
If we write $F'$ for the complement of $(W\cup B)[V^*]$ over $V^*$, i.e.~$F'$ is the graph of free edges on $V^*$,
then the assumptions of Theorem \ref{random} hold. That is, $\delta(F')\geq (1-\varepsilon)N$,
as $d_{F'}(v)\geq N-\frac{\varepsilon n}{2} \geq (1-\varepsilon)N$ for every $v\in V'$, for large enough $n$.
Moreover, Walker has claimed a graph $W_0=G'$ such that between each two vertices of $V^*$ there is a path of length at most $d$. 
Now $p=\omega(\frac{\ln N}{N})$ and $\frac{\varepsilon}{30(d+1)p}=\omega(\frac{n}{\ln^2 n})>b$.
Moreover, the property $\cP=\cP(N)$ of containing a Hamilton cycle is $(p,4\varepsilon)$-resilient, as follows from
Theorem \ref{resilience_new} (applied with $N$ instead of $n$). Thus, applying Theorem
\ref{random}, we conclude that Walker has a strategy to continue the game in such a way that she creates a Hamilton cycle on $V^*$, i.e.~a cycle of length $N=n-O(b)$.
\hfill $\Box$

\subsection{Creating cycles of constant length}\label{WB:graph}

The proof of Theorem \ref{H-game} is similar to that of Theorem \ref{Walker_cycle2}.
Fix an integer $k\ge 3$. Let $d=2k+4$ and let $\gamma>0$ and $C>1$ be given
according to Theorem \ref{resilience_H}, where we may assume that $\gamma<10^{-5}$. Finally, set $b=\frac{\gamma}{1000Ck}\cdot n^{\frac{k-2}{k-1}}$.
According to Proposition \ref{diameter}, Walker in the first $6n$ rounds can create a graph $G'=(V',E')\subset K_n$ on $n-400b$ vertices with diameter at most $2\frac{\ln n}{\ln(\frac{n}{200b})}+6 \le d$. Immediately after Walker has occupied $G'$, consider the induced graph $(W\cup B)[V']$ which has
at most $6(b+1)n\leq 12bn$ edges. Then $V'$ contains a subset $V^*$ of size $N:=\frac{n}{2}$ such that the induced graph 
$(W\cup B)[V^*]$ has maximum degree less than $24b$. If we write $F'$ for the complement of $(W\cup B)[V^*]$ on the
vertex set $V^*$, then $F'$ is a graph on $N$ vertices whose minimum degree is at least $N-24b=(1-o(1))N$. 
Moreover, Walker has claimed a graph $W_0=G'$ such that between each two vertices in $V^*$ there is a path of length at most $d$.
Let $p=CN^{-\frac{k-2}{k-1}}$ and observe that
$\frac{\gamma}{120(d+1)p}> b.$
Moreover, by Theorem \ref{resilience_H}
the property $\cP=\cP(N)$ of containing a copy of $C_k$
is $(p,\gamma)$-resilient (applied with $N$ instead of $n$).
Thus, by applying Theorem \ref{random} with $\varepsilon=\frac{\gamma}{4}$, we conclude that Walker has a strategy to continue the game in such a way that she creates a copy of $C_k$ on $V^*$.
\hfill $\Box$

\section{Proof of Theorem \ref{random}}\label{WB:appendix}

Our proof follows very closely the proof of Theorem 1.5 in \cite{FKN2013}.
In particular, we will use the so called {\it MinBox game}, which was motivated
by the study of the degree game~\cite{GS2009}. The game $\hbox{\it MinBox}(n,D,\alpha,b)$ is a Maker-Breaker game 
on a family of $n$ disjoint boxes $S_1,\ldots,S_n$ with $|S_i|\geq D$ for every $i\in [n]$, where Maker claims one element and Breaker claims at most $b$ elements in each round, and where Maker wins if she manages to occupy at least $\alpha |S_i|$ elements in each box $S_i$.

Throughout such a game, by $w_M(S)$ and $w_B(S)$ we denote the number of elements that Maker and Breaker have claimed so far from the box $S$, respectively. As motivated by \cite{GS2009}, we also set $\dang(S):=w_B(S)-b \cdot w_M(S)$ for every box $S$. If not every element of a box $S$ has been claimed so far, 
then $S$ is said to be {\it free}. Moreover, $S$ is said to be {\it active} if Maker still needs to claim elements of $S$, i.e.~$w_M(S)<\alpha |S|$. The following statement holds.

\begin{theorem} [Theorem 2.3 in \cite{FKN2013}]\label{MinBox}
	Let $n,b,D\in \mathbb{N}$, let $0<\alpha<1$ be a real number, and consider the game  $\hbox{\it MinBox}(n,D,\alpha,b)$.
	Assume that Maker plays as follows: In each turn, she chooses an arbitrary free active box with maximum danger, 
	and then she claims one free element from this box. Then, proceeding according to this strategy,
	\[
	\dang(S) \leq b(\ln n +1)
	\]
	is maintained for every active box $S$ throughout the game.
\end{theorem}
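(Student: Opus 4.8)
The plan is to track the evolution of all the dangers simultaneously by a single exponential weight function. Put $\lambda=e^{1/b}$ and, at any moment of the game, let
\[
\Phi \;=\; \sum_{S\ \text{free and active}}\lambda^{\dang(S)}.
\]
At the start $\dang(S)=0$ for every box, so $\Phi=n$. Since $\lambda^{\dang(S)}\le\Phi$ for each individual free active box $S$, any bound of the shape $\Phi\le e\cdot n$ would immediately give $\dang(S)\le b\ln(en)=b(\ln n+1)$. It will in fact be cleaner to aim for $\Phi\le n$ right after each of Maker's moves: this yields $\dang(S)\le b\ln n$ at those moments, and since each Breaker edge raises exactly one danger by $1$ — and so raises the maximum danger by at most $b$ over one round — the bound $b(\ln n+1)$ then holds throughout. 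Thus the whole statement reduces to showing that $\Phi\le n$ is preserved by one Maker move together with the $\le b$ Breaker moves following it.

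I would run this as an induction on the rounds. For Breaker's part: placing a token into a box currently at danger $d$ multiplies its weight by $\lambda$, so the marginal gain of a token is proportional to the current weight of the box it lands in; hence Breaker does best by dumping all $b$ of his tokens into the box of maximum danger $D$, which increases $\Phi$ by at most $\lambda^{D}(\lambda^{b}-1)=(e-1)\lambda^{D}$. For Maker's part: her greedy rule claims a free element of the current maximum‑danger box, dividing that box's weight by $\lambda^{b}=e$, i.e.\ decreasing $\Phi$ by $\lambda^{D'}(1-e^{-1})$, where $D'$ is the maximum danger just before her move. In the extremal scenario — Breaker really did pile all $b$ tokens onto the top box, so $D'=D+b$ — the decrease $(e-1)\lambda^{D}$ exactly matches the increase, and $\Phi$ does not grow.

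The hard part, and where I expect the real work to lie, is that this exact cancellation breaks down when Breaker, instead of concentrating, spreads his $b$ tokens thinly over many boxes whose danger is close to the current maximum: a single box drained by Maker cannot compensate for weight added to many boxes at once, and $\Phi$ can genuinely increase in such a round. The remedy is to exploit the feedback coming from the bound itself: as long as $\Phi=O(n)$, only boundedly many boxes can sit within $b$ of the maximum danger (their weights alone already sum to at most $\Phi$), so near the top Breaker's spreading has little effect, while Maker's greedy rule prevents any single high danger from persisting. To turn this into a proof I would replace the one inequality $\Phi\le n$ by the family of invariants stating that, for every threshold $\ell$, the number of free active boxes with $\dang(S)\ge\ell$ is at most roughly $n\,e^{-\ell/b}$, and then check that one Breaker move followed by Maker's response preserves all of them at once — using that a Breaker move can push at most $b$ boxes past any fixed threshold, and that Maker's move removes a box from the highest threshold it crosses. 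This bookkeeping is exactly the MinBox analysis of \cite{FKN2013} (itself following the degree‑game argument of \cite{GS2009}); it is routine but somewhat lengthy, and once it is carried out the stated bound on $\dang(S)$ follows at once.
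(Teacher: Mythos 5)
There is a genuine gap, and it is worth pointing out first that the paper itself does not prove Theorem~\ref{MinBox}: it is imported verbatim from \cite{FKN2013}, the only new content being the remark that allowing Breaker \emph{at most} $b$ elements per round changes nothing. So the question is whether your argument stands on its own, and it does not. Your primary device, the potential $\Phi=\sum_S e^{\dang(S)/b}$ with the intended invariant $\Phi\le n$ after each Maker move, is refuted by your own computation: if Breaker spreads his $b$ tokens over $b$ boxes sitting at the current maximum danger $D$, the round adds about $e^{D/b}$ to $\Phi$ while Maker's single move recovers only about $(1-e^{-1})e^{D/b}$, so the invariant is simply false and cannot carry the proof.

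Your proposed remedy does not close this gap. The family of threshold invariants ``the number of free active boxes with $\dang(S)\ge\ell$ is at most roughly $n e^{-\ell/b}$ for every $\ell$'' is a \emph{strengthening} of the theorem, and its round-by-round preservation is precisely the hard part rather than routine bookkeeping: Breaker's $b$ tokens create up to $b$ new threshold crossings and he may place them at levels far below the current maximum, whereas greedy Maker only relieves the top box and hence only decrements the $b$ consecutive counts just below the maximum danger; nothing in your sketch prevents the lower-level counts from drifting past their caps, and those caps are already essentially tight under the uniform-spreading attack, so there is no slack to absorb a crude case analysis (also, the ``roughly'' would have to be eliminated to recover the clean constant in $b(\ln n+1)$). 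Deferring this verification to ``exactly the MinBox analysis of \cite{FKN2013}'' is circular here — that analysis \emph{is} the proof you were asked to supply — and it also mischaracterizes it: the argument of \cite{FKN2013}, following the danger-function method for the degree game in \cite{GS2009}, is not a threshold-counting induction but a backward argument. One assumes some active box violates the bound at some moment, traces Maker's greedy choices backwards in time, and shows that the relevant (maximum/average) danger of the growing family of selected boxes can drop by at most $b/i$ per step when the family has $i$ boxes; since all dangers start at $0$, a violation would require an increase exceeding $b\bigl(1+\tfrac12+\dots+\tfrac1n\bigr)\le b(\ln n+1)$, a contradiction. Supplying such a complete inductive (or backward) argument is exactly what is missing from your proposal.
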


We remark at this point that in \cite{FKN2013}, Breaker claims exactly $b$ elements in each round of $\hbox{\it MinBox}(n,D,\alpha,b)$. However, the condition that Breaker can claim at most $b$ elements does not change the theorem above,
as its proof in \cite{FKN2013} only uses the fact that in each round $\sum_{i\in [n]}w_B(S_i)$ can increase by at most $b$.

{\bf Proof of Theorem \ref{random}}
Let $F_0=F$ be the set of free edges, which is given by the assumption of the theorem, and let $W_0$ be the current graph of Walker.

For contradiction, let us assume that Walker does not have a strategy to occupy a graph satisfying property $\cP$.
Then we know that Breaker needs to have a strategy $S_B$ which prevents Walker from creating a graph with property $\cP$,
independent of how Walker proceeds.

In the following, we describe a randomized strategy for Walker and afterwards we show that,
playing against $S_B$, this randomized strategy lets Walker create a graph from $\cP$ with high probability,
thus achieving a contradiction. The main idea of the strategy, as motivated by \cite{FKN2013},
is as follows: throughout the game Walker generates a random graph $H\sim \gnp$ on the vertex set $V(K_n)$. Following her randomized strategy,
she then obtains that a.a.s. $d_{W\setminus W_0}(v)\geq (1-4\varepsilon)d_H(v)$ holds for every vertex $v\in V(K_n)$ by the end of the game, where $W$ again denotes Walker's graph.
Thus, by the assumption on $\cP$, we then know that $W'=W\setminus W_0$ a.a.s. satisfies property $\cP$. 

When generating the random graph $H$, Walker tosses a coin on each edge of $K_n$ independently at random (even if this edge belongs to $K_n\setminus F_0$), which succeeds with probability $p$. In case of success, Walker then declares that it is an edge of $H$, and if this edge is still free in the game on $K_n$, she claims it.

To decide which edges to toss a coin on, Walker always identifies an {\em exposure vertex} $v$ (which will be marked
with the color red ). After identification, Walker proceeds to $v$ by reusing the edges of $W_0$.
Once she has reached the vertex $v$, she tosses her coin only on edges that are incident with $v$ and for which she has not yet tossed a coin. 
When she has no success or when she has success on an edge which cannot be claimed anymore (i.e.~this edge belongs to $B\cup (K_n\setminus F_0)$), 
she declares her move a failure. If the first case happens, then we say this is a failure of type~I, and
we set $f_I(v)$ to be number of {\em failures of type I}, for which $v$ is the exposure vertex.
Otherwise, if Walker has success on an edge of $B\cup (K_n\setminus F_0)$, then it said to be
a {\em failure of type II},
and with $f_{II}(v)$ we denote the number of edges that are incident with $v$ in $K_n$, and which were failures of type II.

To reach our goal, it suffices to prove that following Walker's randomized strategy 
we a.a.s. obtain that $f_{II}(v)<3.9\varepsilon np$ holds for every vertex $v\in V(K_n)$ at the end of the game. Indeed, by a simple Chernoff-type argument \cite{AS} with $p\ge \frac{\ln n}{\varepsilon n} \ge \frac{10^5\ln n}{n}$ one can verify that a.a.s. $d_H(v)\geq 0.99np$ holds for every $v\in V$,
which then yields $f_{II}(v)<4\varepsilon d_H(v)$
and $d_{W\setminus W_0}(v)\geq (1-4\varepsilon)d_H(v)$.

As in the proof of Theorem 1.5 in \cite{FKN2013}, we also say that Walker {\it exposes} an edge $e\in E(K_n)$ 
whenever she tosses a coin for the edge $e$; and
we also consider the set $U_v \subseteq N_G(v)$ which contains those vertices $u$ for which the edge $vu$ is still not exposed. 

Now, to make sure that the failures of type II do not happen very often, we associate a box $S_v$ of size $4n$ to every vertex $v\in V(K_n)$, and we use the game
$\hbox{\it MinBox}(n,4n,\frac{p}{2},2b(d+1))$ on the family of these boxes to determine the exposure vertex.
In this game, Walker imagines playing the role of Maker. The idea behind this simulated game is
to relate Breaker's degree $d_B(v)$ to the value $w_B(S_v)$,
and to relate the number of Walker's exposure processes at $v$ to Maker's value $w_M(S_v)$.
This way, we ensure that Walker stops doing exposure processes at $v$, once $d_B(v)$ becomes large,
which helps to keep the expected number of failures of type II small.

We now come to an explicit description of Walker's (randomized) strategy. Afterwards, we show that she can follow that strategy
and that, by the end of the game, a.a.s. $f_{II}(v)<3.9\varepsilon np$ holds for every vertex $v\in V(K_n)$.

{\bf Stage I.} Suppose Walker's $t^{\text{th}}$ move is in Stage I, and let $e_1,\ldots,e_b$ be the edges that Breaker claimed
in his previous move. Moreover, let $v_{t-1}$ be Walker's current position. Then Walker at first updates the simulated game $\hbox{\it MinBox}(n,4n,\frac{p}{2},2b(d+1))$
as follows: for every vertex $u\in V$, Breaker claims $|\{i\leq b:\ u\in e_i\}|$ free elements in $S_u$. (So, in total, Breaker
receives $2b$ free elements over all boxes $S_u$.)
In the real game, she then looks for a vertex $v$ that is colored red. If such a vertex exists, then she proceeds immediately with the case disctinction below. Otherwise, if no vertex has color red, then she first does the following: she identifies a vertex $v$ for which in the simulated game $\hbox{\it MinBox}(n,4n,\frac{p}{2},2b(d+1))$, $S_v$ is a free active box of largest danger value. If no such box exists, then Walker proceeds to Stage II. Otherwise,
she colors the vertex $v$ red (to identify it as her exposure vertex), Maker claims an element of $S_v$
in the simulated game $\hbox{\it MinBox}(n,4n,\frac{p}{2},2b(d+1))$, and then Walker proceeds with the following 
cases:

{\bf Case 1.} $v_{t-1}\neq v$. Let $P=(v_{t-1},x_1,\ldots,x_r,v)$ be a shortest $v_{t-1}$-$v$-path in $W_0$.
Then Walker reuses the edge $v_{t-1}x_1$ (to get closer to $v$), makes $x_1$ her new position and finishes her move.

{\bf Case 2.} $v_{t-1} = v$, i.e.~Walker's current position is the (red) exposure vertex.
Then Walker starts her exposure process
on the edges $vw$ with $w\in U_v$. She fixes an arbitrary ordering $\sigma: [|U_v|] \rightarrow U_v$ of the vertices of $U_v$,
and she tosses her coin on the vertices of $U_v$ according to that ordering, indepentently at random, with $p$ being the probability of success.

\begin{itemize}
	\item[{\bf 2a.}] If this coin tossing brings no success, then the exposure is a failure of type $I$. So,
	Walker increases the value of $f_I(v)$ by 1. In the simulated game $\hbox{\it MinBox}(n,4n,\frac{p}{2},2b(d+1))$, Maker receives $2pn-1$
	free elements in $S_v$ (or all remaining free elements if their number is less than $2pn-1$).
	In the real game, as all edges incident with $v$ are exposed,
	$U_v$ becomes the empty set, while $v$ is removed from every other set $U_w$.
	Walker removes the color from $v$ and she finishs her move by reusing an arbitrary edge of $W_0$.
	\item[{\bf 2b.}] Otherwise, let Walker's first success happen at the $k^{\text{th}}$ coin toss. 
	We distinguish between the following two subcases.
	\begin{itemize}
		\item If the edge $v\sigma(k)$ is free, then Walker claims this edge in the real game,
		thus setting $v_t:=\sigma(k)$ for her new position. For every $i\leq k$,
		she removes $v$ from $U_{\sigma(i)}$ and $\sigma(i)$ from $U_v$; moreover she
		removes the color from $v$. In the simulated game $\hbox{\it MinBox}(n,4n,\frac{p}{2},2b(d+1))$,
		Maker claims a free element from the box $S_{\sigma(k)}$. 
		\item If the edge $v\sigma(k)$ is not free, then the exposure is a failure of type II.
		Accordingly, Walker increases the value of $f_{II}(v)$ and $f_{II}(\sigma(k))$ by 1. She updates
		the sets $U_v$ and $U_{\sigma(i)}$ as in the previous case and removes the color from $v$.
		To finish her move, she reuses an arbitrary edge of $W_0$.
	\end{itemize} 
\end{itemize}

{\bf Stage II.}  In this stage, Walker tosses her coin on every unexposed edge $uv\in E(G)$. In case of success, she declares a failure of type II for both vertices $u$ and $v$.

It is easy to see that Walker can follow the proposed strategy. Indeed, the strategy always asks her to claim an edge
which is known to be free or to belong to $W_0$
and which is incident with Walker's current position. We only need to check that Theorem \ref{MinBox} is applicable,
i.e., we need to check that in the simulated game $\hbox{\it MinBox}(n,4n,\frac{p}{2},2b(d+1))$
Breaker claims at most $2b(d+1)$ elements between two consecutive moves of Maker
in which she claims free elements from free active boxes of maximum danger. This follows from Claim
\ref{uniquered}, and the observation that Maker claims such an element when Walker colors some vertex red,
while in each round Breaker claims $2b$ elements in the simulated game.

\begin{claim}\label{uniquered}
	At any point in Stage I, at most one vertex is red. After a vertex $v$ becomes red in Stage I, it takes at most $d+1$ rounds until the color is removed and, in the following round, a new (maybe the same) vertex is colored red.
\end{claim}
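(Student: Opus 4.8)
The plan is to read off both assertions directly from the structure of Walker's Stage~I strategy, tracking when the color red is assigned and when it is removed. First I would establish the uniqueness statement. A vertex only receives the color red at the very end of the identification step, and that step is only executed when \emph{no} vertex currently carries the color red; so at the instant a new vertex is colored, the previous exposure vertex (if any) must already have had its color removed. Hence at most one vertex is red at any moment of Stage~I.

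For the second assertion I would trace what happens after a vertex $v$ becomes red. Immediately after coloring $v$, Walker enters the case distinction of Stage~I. In Case~1 she is not yet at $v$, and she walks toward $v$ along a shortest $v_{t-1}$--$v$-path $P=(v_{t-1},x_1,\ldots,x_r,v)$ in $W_0$; by the hypothesis of Theorem~\ref{random} this path has length at most $d$, so $r+1\le d$. Walker reuses one edge of $P$ per round, so after at most $r\le d-1$ further rounds she reaches $v$ and the round described by Case~2 occurs. In that round Walker performs her exposure process on the edges incident with $v$, and in \emph{every} branch of Case~2 — failure of type~I (Case~2a), a successful claim (first bullet of Case~2b), or a failure of type~II (second bullet of Case~2b) — she explicitly removes the color from $v$ in the same move. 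Thus the color is removed at most $d$ rounds after $v$ became red (at most $d-1$ rounds of walking plus the one round in which the exposure happens; the total count of rounds from the coloring of $v$ until and including the color-removal round is at most $d$, i.e.\ it is removed by the end of the $d+1$st round of the interval if one counts the coloring round as well). In the round following the color-removal round, since no vertex is red, Walker again executes the identification step and either colors a new (possibly the same) vertex red or moves on to Stage~II.

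There is no real obstacle here; the only point requiring a little care is the bookkeeping of exactly which round the coloring and the removal fall in, and the observation that the bound $d$ on the diameter of $W_0$ (an explicit hypothesis of Theorem~\ref{random}) is precisely what bounds the walking phase. One should also note that walking along $W_0$ is always legal for Walker, since every edge of $W_0$ has already been claimed by her, so Breaker cannot obstruct the walk to $v$; this is what guarantees she actually reaches $v$ within the claimed number of rounds. Combining these observations yields both statements of the claim.
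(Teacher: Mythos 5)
Your proposal is correct and follows essentially the same route as the paper's proof: uniqueness comes from the rule that a vertex is only colored when no red vertex exists, and the $d+1$ bound comes from walking along the (previously claimed, hence unobstructable) edges of $W_0$ to the exposure vertex in at most $d$ rounds plus the single exposure round, after which the color is removed in every branch of Case~2. Your round-by-round bookkeeping is slightly more detailed than the paper's but lands at the same bound.
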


\begin{proof}
	The first assertion follows from the fact that, in Stage I, Walker only colors a vertex red if there is no vertex having this color.
	
	For the second assertion, assume $v$ becomes red. Then, according to the strategy description, Walker proceeds to Stage I
	as long as $v$ is red. As long as her current position is different from $v$,
	Walker walks towards the vertex $v$ by reusing the edges of $W_0$. By the assumption on $W_0$
	we know that this takes at most $d$ rounds. Once $v$ is Walker's position, the exposure process starts (which lasts only one round) and independent of its outcome, Walker removes the color from $v$.
\end{proof}

Thus, it remains to prove that, by the end of the game, a.a.s. $f_{II}(v)\leq 3.9np$ holds for every vertex $v\in V(K_n)$.
To do so, we verify the following claims. 

\begin{claim}\label{W1}
	During Stage I, $w_B(S_v) < n$ and $w_M(S_v) < (1+2p)n$ for every $v\in V$.
\end{claim}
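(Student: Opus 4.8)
The plan is to prove the two bounds separately: the bound on $w_B(S_v)$ is essentially immediate from the definition of the simulated game, while the bound on $w_M(S_v)$ needs a short bookkeeping argument about the sets $U_v$.

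For $w_B(S_v)<n$: recall that in Stage~I, after each of Breaker's moves $e_1,\dots,e_b$ in the real game, Breaker is handed exactly $|\{i\le b:\ v\in e_i\}|$ free elements in $S_v$. Since Breaker never claims an edge of $K_n$ twice, summing this quantity over all of Breaker's moves shows that the total number of elements Breaker ever receives in $S_v$ is at most $d_B(v)$, and trivially $d_B(v)\le n-1<n$. (This does not use that $S_v$ stays free; if $S_v$ ever ran out of free elements the count could only be smaller.)

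For $w_M(S_v)<(1+2p)n$, I would first enumerate the only three occasions on which Maker claims an element of $S_v$ during Stage~I: (i) one element each time Walker colours $v$ red; (ii) up to $2pn-1$ elements once, as the bonus handed to Maker after a failure of type~I whose exposure vertex is $v$ (Case~2a); and (iii) one element each time $v=\sigma(k)$ is the location of the first success of an exposure process whose exposure vertex $w\neq v$ and the edge $wv$ is free (Case~2b). The crux is the observation that every red colouring of $v$ that is \emph{not} a type~I failure, and every event of kind~(iii), deletes at least one hitherto unexposed neighbour of $v$ from $U_v$: in a Case~2b move with exposure vertex $v$ the vertices $\sigma(1),\dots,\sigma(k)$ with $k\ge 1$ are removed from $U_v$, and in an event of kind~(iii) the vertex $w$ is removed from $U_v$. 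Since $U_v$ only ever loses elements, all these removals are to pairwise distinct vertices, so the number of non-type-I red colourings of $v$ plus the number of kind-(iii) events is at most $|N_G(v)|\le n-1$. A type~I failure at $v$ happens at most once, since immediately afterwards $U_v=\emptyset$, $v$ is removed from every $U_w$, and $S_v$ is either inactive or fully claimed, so $v$ is never coloured red again and takes part in no further kind-(iii) event; this single occurrence contributes at most $1+(2pn-1)=2pn$. Adding up, $w_M(S_v)\le (n-1)+2pn<(1+2p)n$. (Note that Theorem~\ref{MinBox} is not invoked here; the estimate is purely combinatorial, and in turn this claim is what guarantees every $S_v$ stays free throughout Stage~I, since $w_B(S_v)+w_M(S_v)<(2+2p)n\le 4n$.)

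The routine-but-delicate part, which I would write out carefully, is checking the assertion ``every non-type-I red colouring of $v$ removes at least one element from $U_v$'' across all sub-cases --- in particular that it still holds in the type~II sub-case of Case~2b, where $U_v$ is updated exactly as in the free sub-case, and that the ``or all remaining free elements'' caveat of Case~2a can only decrease the relevant count --- together with verifying that no contribution to $w_M(S_v)$ is double-counted and that all contributions after a type~I failure have been accounted for. None of this is hard, but it is where a careless argument could slip.
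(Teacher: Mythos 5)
Your proof is correct and follows essentially the same route as the paper: $w_B(S_v)<n$ because Breaker gets one element of $S_v$ per distinct Breaker edge at $v$, and $w_M(S_v)$ is bounded by splitting its increments into red colourings of $v$, successes landing at $v$ from another exposure vertex, and the single type~I failure at $v$ (worth at most $2pn$), with the first two sources bounded by $n-1$ via the exposure processes at $v$. Your bookkeeping through distinct removals from $U_v$ is just a slightly more explicit version of the paper's count of exposure processes touching $v$, so there is nothing to fix.
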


\begin{proof}
	According to the strategy description, Breaker claims an element of $S_v$
	in the simulated game if and only if in the real game he claims
	an edge incident with $v$. Thus, $w_B(S_v)<n$ follows.
	Moreover, we observe the following:
	$w_M(S_v)$ is increased by 1 each time $v$ is colored red,
	and it is increased by at most 1 when Walker has success on an edge $vw$ where $w$
	is the red vertex (Case 2b). Both cases together can happen at most $n-1$ times,
	since we can have at most $n-1$ exposure processes in which we toss a coin on an edge
	that is incident with $v$.
	Additionally, $w_M(S_v)$ increases by at most $2pn-1$ when $v$ is the exposure vertex
	and a failure of type I happens (Case 2a). However, this can happen at most once, since after
	a failure of type I (Case 2a), Walker ensures that in the simulated game $S_v$ is not
	free or active anymore and thus, $v$ will not become the exposure vertex again.  
	Thus, the bound on $w_M(S_v)$ follows.
\end{proof}

\begin{claim}\label{W2}
	For every vertex $v\in V(G)$, $S_v$ becomes inactive before $d_B(v)\geq \frac{\varepsilon n}{5}$.	
\end{claim}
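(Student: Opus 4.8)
The plan is to read the claim off the danger bound of the simulated MinBox game, using the two quantitative hypotheses of Theorem~\ref{random}. We have already observed that Theorem~\ref{MinBox} applies to the game $\hbox{\it MinBox}(n,4n,\tfrac{p}{2},2b(d+1))$: by Claim~\ref{uniquered} at most $d+1$ rounds pass between two consecutive colourings of a vertex red, hence between two consecutive \emph{strategic} moves of Maker (the ones in which she claims a free active box of maximum danger), and since Breaker gains only $2b$ elements of the boxes per round, he gains at most $2b(d+1)$ elements between such moves; the additional elements Maker grabs in Cases~2a and~2b are extra claims, each taken from a single box, and therefore only decrease that box's danger. Consequently Theorem~\ref{MinBox} (applied with bias $2b(d+1)$) yields, at every moment of Stage~I and for every active box $S_v$,
$$\dang(S_v)=w_B(S_v)-2b(d+1)\,w_M(S_v)\ \le\ 2b(d+1)(\ln n+1).$$

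The next step is to turn this into a bound on $d_B(v)$. As long as $S_v$ is active we have $w_M(S_v)<\tfrac{p}{2}\cdot 4n=2pn$, so the displayed inequality gives
$$w_B(S_v)\ <\ 2b(d+1)\bigl(\ln n+1+2pn\bigr).$$
Now I would plug in $b\le\frac{\varepsilon}{30(d+1)p}$, which gives $2b(d+1)\le\frac{\varepsilon}{15p}$, and $p\ge\frac{\ln n}{\varepsilon n}$, which gives $\frac{\ln n+1}{p}\le\bigl(1+\tfrac{1}{\ln n}\bigr)\varepsilon n\le 2\varepsilon n$ for large $n$. Together these yield
$$w_B(S_v)\ <\ \frac{\varepsilon}{15}\Bigl(\frac{\ln n+1}{p}+2n\Bigr)\ \le\ \frac{\varepsilon}{15}\bigl(2\varepsilon n+2n\bigr)\ =\ \frac{2(1+\varepsilon)}{15}\,\varepsilon n\ <\ \frac{\varepsilon n}{5},$$
the last inequality using $\varepsilon<10^{-5}$. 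Finally, by the box-update rule Breaker receives an element of $S_v$ precisely when he claims a real edge incident with $v$, so $w_B(S_v)=d_B(v)$ holds throughout Stage~I; hence $d_B(v)<\varepsilon n/5$ for as long as $S_v$ remains active, which is exactly the assertion.

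I do not expect a genuine obstacle here: once Theorem~\ref{MinBox} is in hand the statement is pure bookkeeping. The one estimate that really matters is $\ln n+1=o(pn)$, which is what makes the dominant contribution to $w_B(S_v)$ equal to $2b(d+1)\cdot 2pn$; Breaker's bias $\frac{\varepsilon}{30(d+1)p}$ in Theorem~\ref{random} is calibrated precisely so that this dominant term sits comfortably below $\varepsilon n/5$, leaving ample slack for the lower-order terms $\ln n+1$. A secondary point to state carefully is the one already mentioned above, namely that Maker's non-strategic box claims in Cases~2a and~2b do not spoil the invariant of Theorem~\ref{MinBox}.
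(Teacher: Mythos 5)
Your proposal is correct and is essentially the paper's argument: both combine the danger bound of Theorem~\ref{MinBox} for the simulated game $\hbox{\it MinBox}(n,4n,\tfrac{p}{2},2b(d+1))$ with the activity bound $w_M(S_v)<2pn$, the identification $w_B(S_v)=d_B(v)$, and the hypotheses $b\le\frac{\varepsilon}{30(d+1)p}$ and $p\ge\frac{\ln n}{\varepsilon n}$. The paper merely phrases the same computation as a contradiction (an active box with $d_B(v)\ge\varepsilon n/5$ would force $w_M(S_v)\ge 3pn-(\ln n+1)>2pn$), while you solve for $w_B(S_v)$ directly; this is only a rearrangement, not a different route.
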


\begin{proof}
	Assume to the contrary that $w_B(S_v)=d_B(v) \geq \frac{\varepsilon n}{5}$ for some active box $S_v$. Then, by Theorem \ref{MinBox}, 
	$w_B(S_v)-2(d+1)b\cdot w_M(S_v) \leq 2(d+1)b(\ln n +1)$. 
	With $b=\frac{\varepsilon}{30(d+1)p}$ we then conclude
	$w_M(S_v)\geq 3pn-(\ln n + 1) >2pn$, where in the second inequality we used the fact that $p\ge \frac{\ln n}{\varepsilon n} \ge \frac{10^5\ln n}{n}$. 
	However, this contradicts with the assumption that $S_v$ is active. 
\end{proof}

\begin{claim}\label{W3}
	A.a.s. for every vertex $v\in V$ the following holds: As long as $U_v\neq \emptyset$ holds, we have that $S_v$ is active.
	In particular, a.a.s. every edge of $K_n$ will be exposed in Stage I.
\end{claim}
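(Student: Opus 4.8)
The plan is to show that, for every vertex $v$ and for as long as $U_v\neq\emptyset$, the number $w_M(S_v)$ of elements Maker has claimed from the box $S_v$ in the simulated game never exceeds $d_H(v)+1$, where $H\sim\gnp$ is the random graph generated by Walker's coin tosses (so that $d_H(v)$ is precisely the number of edges incident with $v$ whose coin toss has succeeded). Since a.a.s.\ $d_H(v)\le 1.1\,np$ for every $v$ by a Chernoff bound (cf.\ the estimate $d_H(v)\ge 0.99np$ shown earlier), and since $2pn=\tfrac p2\cdot 4n=\tfrac p2|S_v|$ is exactly the threshold at which $S_v$ becomes inactive, this will give that $S_v$ stays active as long as $U_v\neq\emptyset$, provided $n$ is large.

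The crux is therefore the deterministic inequality $w_M(S_v)\le d_H(v)+1$ whenever $U_v\neq\emptyset$. Since the sets $U_w$ can only shrink, $U_v\neq\emptyset$ now forces $U_v\neq\emptyset$ at every earlier moment, so no failure of type~I has occurred at $v$ (Case~2a empties $U_v$) and in particular the one-time jump of $w_M(S_v)$ by $2pn-1$ has not happened. Inspecting the strategy, $w_M(S_v)$ can then only have increased (i)~by $1$ each time $v$ was coloured red, and (ii)~by $1$ each time an exposure process at some red neighbour $w$ of $v$ terminated (via Case~2b) with its first success being on the edge $vw$ and $vw$ free. I would charge each of these increments to a distinct edge of $H$ incident with $v$: every \emph{completed} exposure at $v$ ended via Case~2b (again since $U_v$ stays non-empty), hence via a success on some edge $v\sigma(k)$, which then gets exposed once and for all and belongs to $H$; at most one red-colouring of $v$ can still be pending, since by Claim~\ref{uniquered} at most one vertex is red at a time; and each type-(ii) event uses a success on the edge $vw\in H$, again exposed once and for all. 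Edges tossed during an exposure at $v$ and during an exposure at a neighbour of $v$ are pairwise distinct, so all these edges of $H$ at $v$ are distinct, and $w_M(S_v)\le d_H(v)+1$ follows.

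Combining this with the a.a.s.\ bound $d_H(v)\le 1.1\,np$ and using $np\ge 10^5\ln n$, a.a.s.\ $w_M(S_v)\le 1.1\,np+1<2pn=\tfrac p2|S_v|$ for every $v$ as long as $U_v\neq\emptyset$, i.e.\ $S_v$ is then active; this is the first assertion, and I condition on this a.a.s.\ event from now on. For the ``in particular'' statement, suppose that at some point of Stage~I an edge $vw$ is still unexposed, i.e.\ $w\in U_v$. Then $S_v$ is active by the first part, and by Claim~\ref{W1} we have $w_B(S_v)+w_M(S_v)<n+2pn<4n=|S_v|$, so $S_v$ is also free. Hence whenever an unexposed edge remains and no vertex is currently red, Walker finds a free active box and colours a new vertex red, and by Claim~\ref{uniquered} an exposure process is then carried out within at most $d+1$ rounds, exposing at least one new edge of $K_n$. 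As $K_n$ has only finitely many edges, Walker cannot leave Stage~I before all of them have been exposed.

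The delicate step is the deterministic charging in the second paragraph: one must check carefully that while $U_v\neq\emptyset$ every exposure process at $v$ really terminates through Case~2b and contributes a fresh edge of $H$ incident with $v$, and that the only increase of $w_M(S_v)$ not accounted for by such edges is the single possibly-pending red-colouring. Once this is in place, the remainder is just the Chernoff estimate together with the already-established Claims~\ref{uniquered} and~\ref{W1}.
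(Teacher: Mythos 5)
Your proof is correct and follows essentially the same route as the paper: your deterministic charging bound $w_M(S_v)\le d_H(v)+1$ while $U_v\neq\emptyset$ is just the contrapositive form of the paper's count that an inactive box with $U_v\neq\emptyset$ would force at least $2np-1$ successes at $v$, and both arguments then finish with the same Chernoff estimate on $d_H(v)$ and the same use of Claim~\ref{W1} (freeness of $S_v$) for the ``in particular'' statement.
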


\begin{proof}
	Suppose there is a vertex, say $v$, with $U_v\neq \emptyset$ such that $S_v$ is not active. Then $f_{I}(v)=0$ and
	$2np=\frac{p}{2}|S_v| \leq w_M(S_v)$. As discussed in the previous proof, $w_M(S_v)$ could always increase by 1
	when Walker had success on an edge $vw$ where $w$ was the exposure vertex  (Case 2b), or 
	when $v$ was colored red. Notice that in the second case, Walker then exposed edges at $v$ and (besides maybe the last exposure
	process at $v$) she had success on some edge, as $f_I(v)=0$. But this means that Walker had success on at least $2pn-1$
	edges incident with $v$, i.e., $d_H(v)\geq 2np-1$. However, a simple Chernoff argument \cite{AS} shows that for $H\sim\gnp$
	a.a.s. for all vertices $v$ the last inequality will not happen. Thus, the first statement follows.
	Now, let us condition on the first statement and assume that there is an edge $uv$ of $K_n$ which is not exposed at the
	end of Stage I. Then $U_v\neq\emptyset$ and therefore $S_v$ is active.
	Moreover, by Claim~\ref{W1}, $S_v$ is free, as $w_M(S_v)+w_B(S_v)<|S_v|$. But this is in contradiction with
	the fact that Walker does not continue with Stage I.
\end{proof}

\begin{claim}\label{W4}
	A.a.s. for every vertex $v\in V(G)$, we have $f_{II}(v)\leq 3.9 \varepsilon np$.
\end{claim}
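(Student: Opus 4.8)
The plan is to show that for each fixed vertex $v$, the expected number of type-II failures at $v$ is small, and then use concentration plus a union bound over the $n$ vertices. The quantity $f_{II}(v)$ counts edges $vw$ on which Walker had a successful coin toss but which turned out to be blocked (i.e.~in $B$ or in $K_n\setminus F_0$). I would split this count into two contributions: edges that were \emph{already} blocked at the moment the coin was tossed, and edges $vw$ for which $w$ was the exposure vertex and $v$ happened to be claimed by Breaker just before the toss reached $v$ in the ordering $\sigma$.

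First I would bound the number of \emph{potentially} bad edges at $v$, i.e.~edges incident with $v$ that ever lie in $B\cup(K_n\setminus F_0)$ during Stage~I. By the hypothesis $\delta(F_0)\ge(1-\varepsilon)n$ there are at most $\varepsilon n$ edges of $K_n\setminus F_0$ at $v$. For the Breaker edges: by Claim~\ref{W3} we may a.a.s.~assume every edge is exposed during Stage~I, and by Claim~\ref{W2} the box $S_v$ becomes inactive — hence Walker stops choosing $v$ as exposure vertex — before $d_B(v)$ reaches $\tfrac{\varepsilon n}{5}$; since $v$ can be the exposure vertex at most once after the last time it is coloured red, Breaker can add at most $O(db)=o(n)$ further edges at $v$ afterwards, so at the end $d_B(v)\le \tfrac{\varepsilon n}{5}+o(n)<\tfrac{\varepsilon n}{4}$ (for large $n$). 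Thus the number of ever-blocked edges at $v$ is at most $\varepsilon n+\tfrac{\varepsilon n}{4}<\tfrac{3\varepsilon n}{2}$. Each such edge $vw$ is declared a type-II failure only if the coin toss on it succeeds (this toss happens exactly once per edge, whether $v$ or $w$ is the exposure vertex), an event of probability $p$, and these tosses are mutually independent. Hence, conditioning on the a.a.s.~events above, $f_{II}(v)$ is stochastically dominated by a binomial random variable with at most $\tfrac{3\varepsilon n}{2}$ trials and success probability $p$, so $\Exp[f_{II}(v)]\le \tfrac{3\varepsilon np}{2}$.

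Then I would apply a Chernoff bound: since $np=\omega(\ln n)$ (indeed $p\ge\frac{\ln n}{\varepsilon n}$ gives $np\ge\frac{\ln n}{\varepsilon}$, and actually $np=\ln n\cdot\ln\ln\ln n\to\infty$ in the application), the probability that $f_{II}(v)$ exceeds $3.9\varepsilon np$ — comfortably more than $2.6$ times its mean bound $\tfrac{3\varepsilon np}{2}$ — is at most $e^{-c\varepsilon np}$ for some absolute constant $c>0$, which is $o(1/n)$. A union bound over all $v\in V$ then shows that a.a.s.~$f_{II}(v)\le 3.9\varepsilon np$ for every vertex simultaneously. Combining this with the failure probabilities of the a.a.s.~events from Claims \ref{W2} and \ref{W3} (all $o(1)$) completes the argument.

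The main obstacle is the careful bookkeeping needed to justify the stochastic domination by a binomial: one must argue that the set of ``ever-blocked'' edges at $v$, although determined by the adversarial and random history, can be enlarged to a fixed (deterministic, history-independent) set of size $\le\tfrac{3\varepsilon n}{2}$ before revealing the coin tosses on those edges — equivalently, that the coin tosses on edges at $v$ are independent of which edges end up blocked, which holds because Breaker's moves and the ordering $\sigma$ are functions of coin outcomes only through edges \emph{already} exposed. Making this conditioning rigorous (e.g.~by exposing coins in the order the game uses them and treating ``blocked at toss time'' as a stopping-time-measurable event) is where the real care lies; the Chernoff-and-union-bound step is routine.
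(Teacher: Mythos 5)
Your overall route is the same as the paper's: condition on Claim \ref{W3} so that all type-II failures occur in Stage I, bound the number of edges at $v$ that can be blocked at the moment of their coin toss by roughly $\varepsilon n$ (from $K_n\setminus F_0$) plus $\varepsilon n/5$ (from Breaker, via Claim \ref{W2}), dominate $f_{II}(v)$ by a binomial, and finish with Chernoff and a union bound. The one genuinely shaky step is your bound on Breaker's contribution. You argue that after $S_v$ becomes inactive Breaker can add only $O(db)=o(n)$ further edges at $v$, ``since $v$ can be the exposure vertex at most once after the last time it is coloured red'', and conclude $d_B(v)\le \varepsilon n/5+o(n)$ at the end of the game. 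That statement is false in general: once $S_v$ is inactive (or once $U_v=\emptyset$) nothing in the MinBox bookkeeping restrains Breaker at $v$, and edges incident with $v$ may keep being exposed over many later rounds while \emph{other} vertices serve as exposure vertex, during which Breaker is free to claim a linear number of edges at $v$. The reason your proof nevertheless survives is that the end-of-game degree is irrelevant: a coin toss on an edge $vw$ can only occur while $vw$ is unexposed, hence while $U_v\neq\emptyset$; and the first assertion of Claim \ref{W3} (which you invoke only for ``every edge is exposed in Stage I'') says that a.a.s. $S_v$ is still active whenever $U_v\neq\emptyset$, so Claim \ref{W2} yields $d_B(v)<\varepsilon n/5$ at the moment of every coin toss at $v$. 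This is exactly how the paper argues, arriving at the domination of $f_{II}(v)$ by $Bin(m,p)$ with $m\le 1.2\varepsilon n$; you should replace your $o(n)$ patch by this argument rather than attempt to bound the final degree.

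Two smaller remarks. Your looser count $m\le \tfrac{3}{2}\varepsilon n$ still works, but the margin is then exactly $3.9=2.6\times 1.5$, so the per-vertex failure probability is only about $\exp(-1.06\,\varepsilon np)\le n^{-1.06}$ and the union bound closes with little room to spare; the paper's $1.2\varepsilon n$ is more comfortable. Your insistence on justifying the stochastic domination by exposing coins in game order, with ``blocked at toss time'' determined before the toss, is a fair point which the paper leaves implicit.
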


\begin{proof}
	We may condition on the statements that a.a.s. hold according to Claim \ref{W3}.
	In particular, all failures of type II happen in Stage I.
	Moreover, by Claim \ref{W2} we then have $d_B(v)\leq \frac{\varepsilon n}{5}$ 
	as long as $U_v\neq \emptyset$, for every $v\in V$, and
	by assumption we have $d_{K_n\setminus F_0}(v)\leq \varepsilon n$.
	Now, in Stage I, a failure of type II happens only if Walker has success on an edge $e$
	which already belongs to Breaker's graph, i.e.~$e\in E(B)$, or which is already claimed at the beginning,
	i.e.~$e\in E(K_n\setminus F_0)$. 
	In particular, for every $v\in V$ there is a non-negative integer $m\leq 1.2\varepsilon n$
	such that $f_{II}(v)$ is dominated by $Bin(m,p)$. Applying a Chernoff argument \cite{AS} with $p\ge \frac{\ln n}{\varepsilon n} \ge \frac{10^5 \ln n}{n}$, we obtain that a.a.s. $f_{II}(v)\leq 3.9\varepsilon np$ for every vertex $v\in V$.
\end{proof}

The last claim completes the proof of Theorem \ref{random}.\hfill $\Box$

\section{Concluding remarks}\label{WB:remarks}

{\bf Creating any cycle.} The following natural question springs to mind.
\begin{problem}
What is the largest bias $b$ for which Walker has a strategy to create a cycle (of any length)
in the $(1:b)$ Walker-Breaker game on $K_n$?
\end{problem}
All we know about this game is that the bias $b$ must be smaller than $\lceil \frac{n}{2}\rceil-1$. This is straightforward from a result of Bednarska and Pikhurko \cite{BP2005} which tells us in the biased $(1:b)$ Maker-Breaker on $K_n$, Maker can build a cycle (no matter who starts) if and only if $b<\lceil \frac{n}{2}\rceil-1$.

{\bf Creating large subgraphs.}
In Theorem \ref{H-game} we studied (biased) Walker-Breaker games in which Walker aims to
create a copy of some fixed graph of constant size. It also seems to be interesting to study
which large subgraphs Walker can create in the unbiased game on $K_n$. 
As Breaker can prevent Walker from visiting every vertex of $K_n$,
Walker cannot hope to occupy spanning structures.
As shown in Theorem \ref{Walker_cycle1}, Walker however can occupy a cycle of length $n-2$,
and applying a similar method as in the proof of Theorem \ref{Walker_cycle1},
we are also able to show that Walker can create a path of length $n-2$ (i.e.~with $n-1$ vertices)
within $n$ rounds. We wonder which other graphs (e.g.~trees) on $n-1$ vertices Walker can create.
Moreover, as already asked by Espig, Frieze, Krivelevich and Pegden~\cite{EFKP2014}, it seems to be challenging to find
the size of the largest clique that Walker can occupy. Notice that it is not hard to see that the answer
is of order $\Theta(\ln n)$.

\begin{problem}
Determine the largest $k$ such that Walker has a strategy to create a clique of size $k$.
\end{problem}

{\bf Visiting as many edges as possible.}
For constant bias $b$, we are able to show that Walker can visit
$\frac{1}{b+1}\binom{n}{2}-\Theta(n)$ edges.
However, our proof idea
heavily makes use of the possibility of repeating edges. 
To make Walker's life harder, it seems natural to study a variant where she is not allowed to
choose edges twice. We wonder how many edges Walker can occupy
in the unbiased Walker-Breaker game on $K_n$, under this restriction. One easily proves
that the answer to this question is of order $\Theta(n^2)$.

\begin{problem}
How many edges can Walker claim in the unbiased Walker-Breaker game
when she is not allowed to walk along eges twice?
\end{problem}

{\bf Doubly biased games.}
As observed earlier, Walker cannot hope to create spanning structures, when her bias equals 1,
as Breaker can isolate vertices easily in this case. This situation changes immediately when we
increase Walker's bias. In particular, the following problems seem to be of particular interest.

\begin{problem}
What is the largest bias $b$ for which Walker has a strategy to create a spanning tree of $K_n$
in the $(2:b)$ Walker-Breaker game on $K_n$?
\end{problem}

\begin{problem}
Is there a constant $c>0$ such that Walker has a strategy
to occupy a Hamilton cycle of $K_n$
in the $(2:\frac{cn}{\ln n})$ Walker-Breaker game on $K_n$?
\end{problem}

\section*{Acknowledgement}
The first author was supported by DFG project SZ 261/1-1. The research of the second author was supported by DFG within the Research Training Group ``Methods for Discrete Structures''.

\end{document}